\documentclass[11pt,oneside]{article}
\usepackage{supertabular, setspace,hyperref}
\usepackage[utf8]{inputenc}
\usepackage{amsmath}
\usepackage{amsfonts}
\usepackage{amssymb}
\usepackage{amsthm}
\usepackage{mathtools}
\newtheorem{theorem}{Theorem}[section]

\newtheorem{example}[theorem]{Example}

\newtheorem{remark}{\sc Remark}
\newtheorem{lemma}{\sc Lemma}[section]
\newtheorem{corollary}{\sc Corollary}[section]
\newtheorem{definition}{\sc Definition}[section]
\newtheorem{proposition}{\sc Proposition}[section]

\newcommand{\be}{\begin{eqnarray}}
\newcommand{\ee}{\end{eqnarray}}
\newcommand{\Be}{\begin{eqnarray*}}
	\newcommand{\Ee}{\end{eqnarray*}}
\newcommand{\bee}{\begin{equation}}
\newcommand{\eee}{\end{equation}}
\newcommand{\ba}{\begin{array}}
	\newcommand{\ea}{\end{array}}
\newcommand{\bl}{\begin{lemma}}
	\newcommand{\el}{\end{lemma}}
\newcommand{\bd}{\begin{definition}}
	\newcommand{\ed}{\end{definition}}
\newcommand{\bt}{\begin{theorem}}
	\newcommand{\et}{\end{theorem}}
\newcommand{\bp}{\begin{proof}}
	\newcommand{\ep}{\end{proof}}
\newcommand{\bi}{\begin{itemize}}
	\newcommand{\ei}{\end{itemize}}
\newcommand{\br}{\begin{remark}}
	\newcommand{\er}{\end{remark}}
\newcommand{\bc}{\begin{corollary}}
	\newcommand{\ec}{\end{corollary}}
\newcommand{\bex}{\begin{example}}
	\newcommand{\eex}{\end{example}}
\begin{document}
	\date{}
	\title{\textbf{Homogeneous Finsler spaces with some special $(\alpha, \beta)$-metrics}}
	\maketitle
	\begin{center}
		\author{\textbf{Gauree Shanker and Kirandeep Kaur}}
	\end{center}
	\begin{center}
		Department of Mathematics and Statistics\\
		School of Basic and Applied Sciences\\
		Central University of Punjab, Bathinda, Punjab-151001, India\\
		Email:   gshankar@cup.ac.in, kiran5iitd@yahoo.com
	\end{center}
	\begin{center}
		\textbf{Abstract}
	\end{center}
	\begin{small}
			In this paper, first we prove the existence of invariant vector field on a homogeneous Finsler space with infinite series $(\alpha, \beta)$-metric and exponential metric. Next, we deduce an explicit formula for the  the $S$-curvature of homogeneous Finsler space with these metrics. Using this formula, we further derive the formula for mean Berwald curvature of the homogeneous Finsler space with the above mentioned metrics. 
	\end{small}\\
	\textbf{2010 Mathematics Subject Classification:} 22E60, 53C30, 53C60.\\
	\textbf{Keywords and Phrases:} Homogeneous Finsler space, invariant vector field,  infinite series $(\alpha, \beta)$-metric, exponential metric, $S$-curvature, mean Berwald curvature.
	\section{Introduction}
	The main purpose of this paper is to give a formula for   $S$-curvature and mean Berwald curvature of the homogeneous Finsler space with infinite series $(\alpha, \beta)$-metric and also for exponential metric. According to S. S. Chern (\cite{1996 Chern}), Finsler geometry is just the Riemannian geometry without the quadratic restriction. The notion of  $(\alpha, \beta)$-metric in Finsler geometry was introduced by M. Matsumoto in 1972 (\cite{M.Mat1972}). An $(\alpha,\beta)$-metric is a Finsler metric of the form $F= \alpha \phi(s), \ s= \dfrac{\beta}{\alpha}$, where $\alpha= \sqrt{a_{ij}(x)y^iy^j}$ is a Riemannian metric on a connected smooth $n$-manifold $M$ and $\beta= b_i(x) y^i$ is a 1-form on $M$. It is well known fact that  $(\alpha,\beta)$-metrics are the generalizations of the Randers metric introduced by G. Randers in (\cite{Randers}). $(\alpha,\beta)$-metrics have various applications in physics and biology (\cite{AIM}). 	Consider the $r^{\text{th}}$ series $(\alpha,\beta)$-metric:
	$$ F(\alpha, \beta)= \beta \sum_{k=0}^{k= \infty}\left(\dfrac{\alpha}{\beta} \right)^k .$$
	If $r=1$, then it is a Randers metric.\\
	If $r=\infty$, then $$ F= \dfrac{\beta^2}{\beta - \alpha}.  $$ 
	This metric is called an infinite series $(\alpha,\beta)$-metric. Interesting fact about this metric is that, it is the difference of a Randers metric and a Matsumoto metric, and satisfies Shen's lemma (see lemma \ref{existence of metric}).\\
	Some other important class of  $(\alpha,\beta)$-metrics are Randers metric, Kropina metric, Matsumoto metric and exponential metric etc.
		Many authors (\cite{CS}, \cite{M.Mat1992}, \cite{SB1}, \cite{SB2}, \cite{SR} etc.) have studied various properties of $(\alpha,\beta)$-metrics.  The study of various types of curvatures of Finsler spaces such as $S$-curvature, mean Berwald curvature, flag curvature always remain the central idea in the Finsler geometry.   Z. Shen (\cite{S}) introduced the concept of  $S$-curvature of a Finsler space, in 1997. Cheng and  Shen (\cite{CS}) have given the formula of $S$-curvature of the Finsler space with $(\alpha, \beta)$-metrics, in 2009. \\ 
	
	  Finsler geometry has been developing rapidly since last few decades, after its emergence in 1917 (\cite{Finsler}). Finsler geometry has been influenced by group theory. The celebrated Erlangen program of F. Klein, posed in 1872 (\cite{Klein}), greatly influenced the development of geometry. Klein proposed to categorize the geometries by their chacteristic group of transformations. The Myers-Steenrod theorem, published in 1939 (\cite{Myers-Steenrod}), extended the scope of applying  Lie theory to all homogeneous Riemannian manifolds.\\
	
	\textbf{Theorem}({\bf Myers-Steenrod}): \textit{Let $ M $ be a connected Riemannian manifold. Then the group of isometries $ I(M) $ of $ M $ admits a differentiable structure such that $ I(M) $ is a Lie transformation group of $ M $}.\\
	
	 S. Deng and Z. Hou (\cite{DH})  have generalized this theorem to the Finslerian case, in 2002. This result opened the door for applying Lie theory to study Finsler geometry. The current topics of research  in Finsler geometry are homogeneous Finsler spaces, Finsler spaces with $(\alpha, \beta)$-metrics and symmetric spaces etc.\\
	 
	 To compute the geometric quantities, specially, curvatures is an interesting problem in homogeneous spaces. In 1976, Milnor (\cite{Milnor}) studied the curvature properties of such spaces by using the formula for the sectional curvature of a left invariant Riemannian metric on a Lie group.\\ 
	 
	 S. Deng (\cite{S-cur D}) has derived an explicit formula to find $S$-curvature of homogeneous Randers metric, in 2009. Later, Deng and Wang (\cite{S-cur DW}) deduced a formula for  $S$-curvature and mean Berwald curvature of the homogeneous Finsler space with an $(\alpha, \beta)$-metric, in 2010. \\
	   The paper consists of five  sections arranged as follows:\\
	   
	 Section 2 includes some preliminaries of Finsler geometry. In third section, we  prove the existence of  invariant vector field in a homogeneous Finsler space with infinite series $(\alpha, \beta)$-metric and also for exponential metric. In fourth section,  we derive a formula for $S$-curvature of homogeneous Finsler space with $(\alpha, \beta)$-metric, without using local coordinates and also deduce a formula for $S$-curvature of homogeneous Finsler space with infinite series $(\alpha, \beta)$-metric and for exponential metric. Further, using it, we prove that  homogeneous Finsler space with infinite series $(\alpha, \beta)$-metric has isotropic $S$-curavture if and only if it has vanishing $S$-curvature. Finally, in the last section, we  deduce the formula for mean Berwald curvature of homogeneous Finsler space with afore said metrics. \\

	\section{Preliminaries}
		In this section, we give some basic concepts of Finsler geometry that are required for next sections. For symbols and notations , we refer  (\cite{BCS}, \cite{CSBOOK})  and (\cite{Homogeneous Finsler Spaces}).
	\begin{definition}
		Let $M$ be a  smooth manifold of dimension $n, T_pM$  the tangent space at any point $p \in M.$ A real valued  bilinear function   
		$g\colon T_pM\times T_pM\longrightarrow [0,\infty)$
		is called  a Riemannian metric if it is symmetric and positive-definite,i.e., $\forall\;X,Y \in \mathfrak{X}(M), $ 
		\begin{enumerate}
			\item[\bf(i)]$ g(X,Y)=g(Y,X).$
			\item[\bf(ii)] $ g(X,X)\geq0$ and $ g(X,X)=0$ if and only if $X=0.$ 
		\end{enumerate}
		A smooth manifold with a given Riemannian metric is called a \textbf{Riemannian manifold.}
	\end{definition}
	\begin{definition}
		An n-dimensional real vector space $V$ is called a \textbf{Minkowski space}
		if there exists a real valued function $F:V \longrightarrow \mathbb{R}$ satisfying the following conditions: 
		\begin{enumerate}
			\item[\bf(a)]  $F$ is smooth on $V \backslash \{0\},$ 
			\item[\bf(b)] $F(v) \geq 0  \ \ \forall \ v \in V,$
			\item[\bf(c)] $F$ is positively homogeneous, i.e., $ F(\lambda v)= \lambda F(v), \ \ \forall \ \lambda > 0, $
			\item[\bf(d)] For a basis $\{v_1,\ v_2, \,..., \ v_n\}$ of $V$ and $y= y^iv_i \in V$, the Hessian matrix $\left( g_{_{ij}}\right)= \left( \dfrac{1}{2} F^2_{y^i y^j} \right)  $ is positive-definite at every point of $V \backslash \{0\}.$
		\end{enumerate} 
	Here,	$F$ is called a Minkowski norm.
	\end{definition}
	\begin{definition}	
		A connected smooth manifold $M$ is called a \textbf{Finsler space} if there exists a function $F\colon TM \longrightarrow [0, \infty)$ such that $F$ is smooth on the slit tangent bundle $TM \backslash \{0\}$ and the restriction of $F$ to any $T_p(M), \ p \in M$, is a Minkowski norm. In this case, $F$ is called a Finsler metric. 
	\end{definition}
	Let $(M, F)$ be a Finsler space and let $(x^i,y^i)$ be a standard coordinate system in $T_x(M)$. The induced inner product $g_y$ on $T_x(M)$ is given by $g_y(u,v)=g_{ij}(x,y)u^i v^j$, where $u=u^i \dfrac{\partial}{\partial x^i}, \ v=v^i\dfrac{\partial}{\partial x^i} \in T_x(M) $. Also note that $F(x,y)= \sqrt{g_y(y,y)}.$  \\
	The  condition for an $(\alpha,\beta)$-metric to be a Finsler metric is given in following Shen's lemma:
	\begin{lemma}(\cite{CSBOOK}) {\label{existence of metric}}
		Let $F=\alpha \phi(s), \ s=\beta/ \alpha,$ where $\alpha$ is a Riemannian metric and $\beta$ is a 1-form whose length with respect to $\alpha$ is bounded above, i.e., $b:=\lVert \beta \rVert_{\alpha} < b_0,$ where $b_0$ is a positive real number. Then $F$ is a Finsler metric if and only if the function $\phi=\phi(s)$ is a smooth positive function on $\left( -b_0, b_0\right) $ and satisfies the following condition:
		$$ \phi(s)-s\phi'(s)+\left( b^2-s^2\right) \phi''(s)>0, \ \ \lvert s\rvert \leq b < b_0.$$
		
	\end{lemma}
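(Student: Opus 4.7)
The approach is to compute the fundamental tensor $g_{ij}=\tfrac{1}{2}(F^{2})_{y^{i}y^{j}}$ explicitly on $T_{x}M\setminus\{0\}$ and to show that the stated inequality is exactly what is needed for its positive definiteness; smoothness and positivity of $\phi$ are then immediate from the very definition of a Minkowski norm, so I can focus on the Hessian condition (d).

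First I would derive a closed form for $g_{ij}$. Writing $L=F^{2}=\alpha^{2}\phi(s)^{2}$ with $s=\beta/\alpha$, and using the elementary identities $\alpha_{y^{i}}=y_{i}/\alpha$ (where $y_{i}:=a_{ij}y^{j}$), $\beta_{y^{i}}=b_{i}$, and $s_{y^{i}}=(b_{i}-s\,y_{i}/\alpha)/\alpha$, a routine two-fold differentiation yields
\[
g_{ij} \;=\; \rho\,a_{ij} \;+\; \rho_{0}\,b_{i}b_{j} \;+\; \rho_{1}\bigl(b_{i}y_{j}+b_{j}y_{i}\bigr)/\alpha \;+\; \rho_{2}\,y_{i}y_{j}/\alpha^{2},
\]
where $\rho=\phi(\phi-s\phi')$ and $\rho_{0},\rho_{1},\rho_{2}$ are explicit expressions in $\phi,\phi',\phi''$ and $s$. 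The key structural observation is that $g_{ij}-\rho\,a_{ij}$ is a tensor of rank at most two, built entirely from the covectors $y_{i}$ and $b_{i}$.

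Next I would compute $\det(g_{ij})$. Passing to an $\alpha$-orthonormal frame whose first two vectors span the plane generated by $y$ and the $\alpha$-dual of $\beta$, the matrix $(g_{ij})$ becomes block-diagonal: an $(n-2)\times(n-2)$ diagonal block equal to $\rho$ times the identity, and a $2\times 2$ block whose determinant, after simplification, is $\phi^{3}[\phi-s\phi'+(b^{2}-s^{2})\phi'']$. This gives the factorisation
\[
\det(g_{ij}) \;=\; \phi^{\,n+1}\,(\phi-s\phi')^{\,n-2}\,\bigl[\phi-s\phi'+(b^{2}-s^{2})\phi''\bigr]\,\det(a_{ij}).
\]
Under the hypothesis the outer factor $\phi$ and the bracketed factor are strictly positive; the auxiliary factor $\phi-s\phi'$ is also positive because specialising the hypothesis to $|s|=b$ kills the $\phi''$ term, and the identity $(\phi-s\phi')'=-s\phi''$ together with a short monotonicity argument in $s$ propagates this positivity to all $|s|\le b$.

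Finally, to upgrade nondegeneracy of $g_{ij}$ to genuine positive definiteness, I would use a homotopy argument: replace $\beta$ by $t\beta$ for $t\in[0,1]$, keeping $\alpha$ fixed. At $t=0$ the metric reduces to the Riemannian metric $\phi(0)\alpha$, whose fundamental tensor is manifestly positive definite. Along the homotopy the entries of $g_{ij}$ depend continuously on $t$, the hypothesis remains valid with $tb<b_{0}$, and by the determinant formula no eigenvalue can cross zero, so positive definiteness is preserved at $t=1$. The converse direction of the ``if and only if'' is immediate: given that $F$ is Finsler, restricting to a plane containing $y$ and the dual of $b$ and reading off $\det(g_{ij})>0$ at arbitrary $s\in[-b,b]$ recovers the displayed inequality, and the remaining conditions on $\phi$ are forced by (a)--(c) of the Minkowski norm definition. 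The main obstacle I expect is the bookkeeping in the explicit determinant factorisation; once that is in hand, the rest of the argument is standard calculus plus a continuity/connectedness argument.
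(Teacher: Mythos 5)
The paper does not prove this lemma at all: it is quoted verbatim from the cited reference (Chern--Shen, \emph{Riemann--Finsler Geometry}), so there is no in-paper argument to compare against. Your outline reproduces the standard proof from that source essentially exactly --- the rank-two decomposition $g_{ij}=\rho a_{ij}+\rho_0 b_ib_j+\rho_1(b_iy_j+b_jy_i)/\alpha+\rho_2 y_iy_j/\alpha^2$ with $\rho=\phi(\phi-s\phi')$, the determinant factorisation $\det(g_{ij})=\phi^{n+1}(\phi-s\phi')^{n-2}\bigl[\phi-s\phi'+(b^2-s^2)\phi''\bigr]\det(a_{ij})$, and the deformation $\beta\mapsto t\beta$ to transport positive definiteness from the Riemannian case $t=0$. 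The only places where your sketch asserts more than it proves are (i) the propagation of $\phi-s\phi'>0$ from $|s|=b$ into $|s|<b$, which needs the main inequality again (at a largest zero $s^*>0$ of $\phi-s\phi'$ one gets $\phi''(s^*)>0$, hence a sign contradiction for the derivative), and (ii) the claim that the hypothesis survives the replacement $b\mapsto tb$, which follows by splitting into the cases $\phi''\ge 0$ (use $\phi-s\phi'>0$) and $\phi''<0$ (use $(t^2b^2-s^2)\phi''\ge(b^2-s^2)\phi''$); both are routine and the argument is correct.
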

	
\begin{definition}
	Let $ (M, F) $ be a Finsler space. A diffeomorphism $\phi\colon  M \longrightarrow M$ is called an \textbf{isometry} if $F\left( \phi(p),  d\phi_p(X)\right) =F(p,X)$ for any $p \in M$ and $X \in T_p(M).$
\end{definition}	
	
	Let $\{e_i\}$ be a basis of an n-dimensional real vector space $V$ and $F$ be a Minkowski norm on $V$. We denote the volume of a subset in $\mathbb{R}^n$ by Vol and $B^n$, the open unit ball. The quantity $$\tau(y)=\ln \dfrac{\sqrt{det(g_{ij}(y))}}{\sigma_F}, \ \ y\in V\backslash \{0\}, $$
	where
	$$ \sigma_F= \dfrac{Vol\left( B^n\right) }{Vol\left\lbrace \left( y^i \right)\in \mathbb{R}^n : F\left( y^ie_i\right) < 1   \right\rbrace },  $$
	is called the distortion of $(V, F).$ Further, let $\tau(x,y)$ be the distortion of $F$ on $T_x(M).$ For any tangent vector $y \in T_x(M)\backslash \{0\}, $ let $\gamma(t)$ be the geodesic such that $\gamma(0)=x, \ \dot{\gamma}(0)=y.$ The rate of change of distortion along the geodesic $\gamma$ is called the \textbf{$S$-curvature} and it is denoted by $S(x,y)$, i.e., 
	$$ S(x,y)= \dfrac{d}{dt}\bigg[\tau\bigg( \gamma(t), \dot{\gamma}(t)\bigg) \bigg]_{t=0}. $$
	This quantity is positively homogeneous of degree one, i.e., $S(x,\lambda y)=\lambda S(x,y), \ \  \lambda>0.$
	We can observe that any Riemannian manifold has vanishing $S$-curvature. Therefore, we can say that $S$-curvature is a non-Riemannian quantity. Further, there is another quantity that is related to $S$-curvature, called $E$-curvature or mean Berwald curvature. The mean Berwald curvature is given by
	$$ E_{ij}(x,y)= \dfrac{1}{2}\dfrac{\partial^2S(x,y)}{\partial y^i \partial y^j}.$$
\begin{definition}
	Let $ G $ be a Lie group and $ M $ a smooth manifold. If $ G $ has a smooth action on $ M $, then $ G $ is called a Lie transformation group of $ M $.
\end{definition}
	\begin{definition}
		A connected Finsler space $ (M, F)$  is said to be homogeneous Finsler space if the action of the group of isometries of $(M,F)$, denoted by $ I(M, F)$, is transitive on $M$. 
	\end{definition}

	\section{Invariant Vector Field}
	In this section, first we prove the existence of invariant vector field corresponding to 1-form $\beta$ for a homogeneous Finsler space with infinite series  $(\alpha, \beta)$-metric $F= \dfrac{\beta^2}{\beta-\alpha}$. For this, first we prove the following lemma:
	
	\begin{lemma}{\label{a1}}
		
	Let $ (M,\alpha)$ be a Riemannian space. Then the infinite series Finsler metric $F= \dfrac{\beta^2}{\beta-\alpha}$, $\beta=b_i y^i$, a  1-form with $\lVert \beta \rVert = \sqrt{b_i b^i} < 1$ consists of a Riemannian metric $\alpha$ alongwith a smooth vector field $X$ on $M$ with $\alpha\left( X\rvert_x\right) < 1$, $ \forall \  x \in M$, i.e.,  $$ F\left( x, y\right)=  \dfrac{\left\langle  X\rvert_x, y\right\rangle^2}{\left\langle  X\rvert_x, y\right\rangle - \alpha\left( x, y\right) }, \ \ x \in M, \ \ y \in T_x M,$$ 
	where $\left\langle \ , \ \right\rangle $ is the inner product induced by the Riemannian metric $\alpha.$
	\end{lemma}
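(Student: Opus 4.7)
The plan is to produce $X$ via the musical isomorphism $\sharp\colon T^{*}M\to TM$ induced by the Riemannian metric $\alpha$, and then check that everything in the statement reduces to elementary index manipulations. The content of the lemma is essentially that a 1-form on a Riemannian manifold is the same data as a vector field, with norms preserved, so once this identification is in place the formula for $F$ follows by simple substitution.

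Concretely, I would first work in a local coordinate chart $(x^{i})$ on $M$, with $\alpha^{2}=a_{ij}(x)\,y^{i}y^{j}$ and $\beta=b_{i}(x)\,y^{i}$. Letting $(a^{ij}(x))$ denote the inverse matrix of $(a_{ij}(x))$, I define
\[
X|_{x}\;=\;a^{ij}(x)\,b_{j}(x)\,\frac{\partial}{\partial x^{i}}.
\]
Standard tensor transformation laws (the index raising is coordinate-invariant) show that this local definition patches together to give a globally well-defined vector field on $M$, and smoothness follows from the smoothness of $a_{ij}$, of its inverse $a^{ij}$, and of $b_{i}$.

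Next I would verify the two required identities. For $y=y^{i}\tfrac{\partial}{\partial x^{i}}\in T_{x}M$, a direct computation gives
\[
\left\langle X|_{x},\,y\right\rangle \;=\; a_{ij}\,X^{i}\,y^{j}
\;=\; a_{ij}\,a^{ik}\,b_{k}\,y^{j}
\;=\; \delta^{k}_{\,j}\,b_{k}\,y^{j}
\;=\; b_{j}\,y^{j}\;=\;\beta(x,y),
\]
and similarly
\[
\alpha\!\left(X|_{x}\right)^{2}\;=\; a_{ij}\,X^{i}\,X^{j}
\;=\; a_{ij}\,a^{ik}b_{k}\,a^{j\ell}b_{\ell}
\;=\; b^{k}b_{k}\;=\;\|\beta\|_{\alpha}^{2}\;<\;1,
\]
so the bound $\alpha(X|_{x})<1$ is inherited from the hypothesis $\|\beta\|_{\alpha}<1$. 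Plugging $\langle X|_{x},y\rangle=\beta(x,y)$ into $F=\beta^{2}/(\beta-\alpha)$ then yields the displayed formula.

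I do not expect any real obstacle here: the lemma is a reinterpretation of $\beta$ via the Riesz-type identification on each tangent space, and the inequality $\alpha(X|_{x})<1$ is immediate from the norm-preserving nature of raising indices. The only point that needs even minimal care is checking that the pointwise coordinate formula for $X$ gives a globally defined smooth section of $TM$, which is routine. With $X$ in hand, Shen's lemma (Lemma \ref{existence of metric}) with $\phi(s)=s^{2}/(s-1)$ guarantees that the resulting $F$ is indeed a Finsler metric in the regime $\|\beta\|_{\alpha}<1$, closing the argument.
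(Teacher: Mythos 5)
Your proposal is correct and takes essentially the same route as the paper: both raise the index of $\beta$ via the metric to define $X\rvert_x = a^{ij}b_j\,\dfrac{\partial}{\partial x^i}$, then verify $\left\langle X\rvert_x, y\right\rangle = \beta(y)$ and $\alpha\left(X\rvert_x\right) = \lVert\beta\rVert_{\alpha} < 1$ by the same index computations. Your added remarks on global well-definedness and the appeal to Shen's lemma are harmless elaborations, not a different argument.
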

	\begin{proof}
     The definition of Riemannian metric assures that the bilinear form $ \left\langle u, v \right\rangle = a_{ij}u^i v^j, \ \ u, v \ \in  T_x M  $ is an inner product on $T_x M$. Further, this inner product induces an inner product on the cotangent space $ T^*_x (M).$ With the help of this inner product, we can define a linear isomorphism between  $T^*_x (M)$ and $ T_x M$.
     Thus the 1-form $\beta$ corresponds to a smooth vector field $X$ on $M$, given by $$ X\rvert_x = b^i \dfrac{\partial}{\partial x_i}, \text{where} \ \   b^i= a^{ij} b_j.$$ 
     Finally, we have
     $$ \left\langle  X\rvert_x, y\right\rangle = \left\langle b^i \dfrac{\partial}{\partial x_i} \ , \  y^j \dfrac{\partial}{\partial x_j} \right\rangle = b^i y^j a_{ij} = b_j y^j = \beta(y).$$
     Also, note that $$\alpha\left( X\rvert_x\right) = \lVert \beta \rVert < 1.$$
	\end{proof}
	
	\begin{lemma}{\label{a2}}
		Let $ (M,F)$ be a Finsler space with infinite series Finsler metric  $F= \dfrac{\beta^2}{\beta-\alpha} $. Then the group of isometries $ I(M, F)$ of $ (M,F)$ is a closed subgroup of the group of isometries $ I(M, \alpha)$ of the  Riemannian  space $ (M,\alpha).$   	
	\end{lemma}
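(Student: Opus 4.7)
The plan is to verify two things: (i) the set-theoretic inclusion $I(M,F)\subseteq I(M,\alpha)$, and (ii) closedness of this subgroup in the compact-open topology on $I(M,\alpha)$. Since $(M,\alpha)$ is Riemannian, $I(M,\alpha)$ is a Lie transformation group by the Myers--Steenrod theorem, and any closed subgroup of a Lie group is a Lie subgroup, so (i) and (ii) together yield the statement.

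For (i) the key trick is the contrast in parity: $\alpha(x,y)$ is even in $y$ while $\beta(x,y)$ is odd, so the quantities $F^+(x,y):=F(x,y)$ and $F^-(x,y):=F(x,-y)$ jointly encode $\alpha$ and $\beta$. Explicitly, from
\begin{align*}
F^+=\frac{\beta^2}{\beta-\alpha},\qquad F^-=\frac{\beta^2}{-\beta-\alpha},
\end{align*}
cross-multiplying and eliminating $\beta^2$ gives $(F^+ +F^-)\beta=(F^+ -F^-)\alpha$, whence $\beta/\alpha=(F^+-F^-)/(F^++F^-)$; once this ratio $s$ is known, $\alpha=F^+(s-1)/s^2$ and $\beta=s\alpha$ are explicit functions of $F^\pm$. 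Now if $\phi\in I(M,F)$, the isometry identity $F^+(\phi(p),d\phi_p y)=F^+(p,y)$ holds for every $(p,y)\in TM$; replacing $y$ by $-y$ and using linearity of $d\phi_p$, the same identity holds with $F^-$ in place of $F^+$. Substituting into the inversion formulas immediately yields $\alpha(\phi(p),d\phi_p y)=\alpha(p,y)$ and $\beta(\phi(p),d\phi_p y)=\beta(p,y)$, which says both that $\phi$ is an isometry of $(M,\alpha)$ and that $\phi^*\beta=\beta$ (equivalently, the invariant vector field $X$ of Lemma~\ref{a1} is preserved, a fact that will be useful in later sections).

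For (ii) I would argue purely by continuity. Give $I(M,\alpha)$ the compact-open topology in which it is a Lie group. If $\phi_n\to\phi$ in $I(M,\alpha)$, then $(\phi_n,d\phi_n)\to(\phi,d\phi)$ uniformly on compact subsets of $TM$, and the relation $F(\phi_n(p),d(\phi_n)_p y)=F(p,y)$ passes to the limit thanks to continuity of $F$ on $TM$. Hence $\phi\in I(M,F)$, proving that $I(M,F)$ is closed in $I(M,\alpha)$.

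The only real obstacle is a bookkeeping one: the algebraic inversion of $F^\pm$ requires $F^++F^-\ne 0$ (equivalently $\alpha\ne 0$) and $s\ne 0$ (i.e.\ $\beta(y)\ne 0$) to avoid vanishing denominators. The first condition is automatic away from the zero section, and the second holds on an open dense subset of $TM\setminus\{0\}$; preservation of $\alpha$ and $\beta$ on this dense set then extends to all of $TM\setminus\{0\}$ by continuity of both sides. Apart from this, the argument is entirely mechanical, and its substance is the two-line algebraic identity that recovers $\alpha$ and $\beta$ from $F(x,\pm y)$.
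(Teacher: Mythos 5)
Your proposal is correct and follows essentially the same route as the paper: both exploit the parity trick of comparing $F(x,y)$ with $F(x,-y)$ to separate the even part $\alpha$ from the odd part $\beta$, concluding that any $F$-isometry preserves both and hence lies in $I(M,\alpha)$. If anything, yours is slightly more careful, since you explicitly handle the degenerate locus $\beta(y)=0$ by a density-and-continuity argument and actually verify closedness in the compact-open topology, whereas the paper's add-and-subtract manipulation implicitly divides by $\langle X\rvert_x , y\rangle$ and merely asserts closedness without proof.
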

	\begin{proof}
	Let $\psi$ be an isometry of $(M,F)$ and let $ x \in M.$ Therefore,  for every $ y \in T_x M$, we have 
$$ F(x,y) = F( \psi(x), d \psi_x(y)).$$
Applying  Lemma \ref{a1}, we get
\begin{equation*} 
  \dfrac{\left\langle  X\rvert_x, y\right\rangle^2}{\left\langle  X\rvert_x, y\right\rangle - \alpha\left( x, y\right) } =\dfrac{\left\langle  X\rvert_{\psi(x)}, d \psi_x(y)\right\rangle^2}{\left\langle  X\rvert_{\psi(x)}, d \psi_x(y)\right\rangle - \alpha\left( \psi(x), d \psi_x(y)\right) },
  \end{equation*}
  which implies
  \begin{equation}{\label{e1}}
  \begin{split}
 \left\langle  X\rvert_x, y\right\rangle^2 \left\langle  X\rvert_{\psi(x)}, d \psi_x(y)\right\rangle - \left\langle  X\rvert_x, y\right\rangle^2 \alpha\left( \psi(x), d \psi_x(y)\right)  \\
 = \left\langle  X\rvert_x, y\right\rangle \left\langle  X\rvert_{\psi(x)}, d \psi_x(y)\right\rangle^2 - \alpha\left( x, y\right) \left\langle  X\rvert_{\psi(x)}, d \psi_x(y)\right\rangle^2.
 \end{split} 
 \end{equation}
 Replacing $y $ by $-y$ in equation (\ref{e1}), we get
 \begin{equation}{\label{e2}}
 \begin{split}
 \left\langle  X\rvert_x, y\right\rangle^2 \left\langle  X\rvert_{\psi(x)}, d \psi_x(y)\right\rangle + \left\langle  X\rvert_x, y\right\rangle^2 \alpha\left( \psi(x), d \psi_x(y)\right)  \\
 = \left\langle  X\rvert_x, y\right\rangle \left\langle  X\rvert_{\psi(x)}, d \psi_x(y)\right\rangle^2 + \alpha\left( x, y\right) \left\langle  X\rvert_{\psi(x)}, d \psi_x(y)\right\rangle^2. 
 \end{split}
 \end{equation}
 Adding  equations (\ref{e1}) and (\ref{e2}), we get 
 \begin{equation}{\label{e3}}
  \left\langle  X\rvert_x, y\right\rangle = \left\langle  X\rvert_{\psi(x)}, d \psi_x(y)\right\rangle. 
  \end{equation}
 Subtracting  equation (\ref{e2}) from equation (\ref{e1}) and using  equation (\ref{e3}), we get 
 \begin{equation}
\alpha\left( x, y\right)= \alpha\left( \psi(x), d \psi_x(y)\right).
 \end{equation}
 Therefore $ \psi $ is an isometry with respect to the Riemannian metric $\alpha$ and $ d \psi_x\left( X\rvert_x\right) = X\rvert_{\psi(x)}.$ Thus $ I(M,F)$ is a closed subgroup of $ I(M,\alpha).$ 
	\end{proof}

     Let $ (M,F)$ be a  homogeneous Finsler space with infinite series Finsler metric  $F= \dfrac{\beta^2}{\beta-\alpha} $, then from the Lemma (\ref{a2}), it is clear that the Riemannian manifold $(M, \alpha)$ is homogeneous. $M$ can be written as a coset space $G/H$, where $G= I(M,F)$ is a Lie transformation group of $M$ and $H$,  the compact isotropy subgroup of $ I(M,F)$ at some point $x \in M$(\cite{DH}) . Let $ \mathfrak g$ and $ \mathfrak h $ be the Lie algebras of the Lie groups $G$ and $H$ respectively. If $ \mathfrak g$ can be written as a direct sum of subspace $\mathfrak h$ and subspace $\mathfrak m$ of $\mathfrak g$   such that $\text{Ad}(h) \mathfrak m \subset \mathfrak m \ \ \forall \  h \in H,$ then $(G/H,F)$ is called a reductive homogeneous manifold (\cite{N}). Note that a  Finsler metric $F$ can be viewed as a $G$-invariant Finsler metric on $M$. Thus, we can say that any homogeneous Finsler manifold can be written as a coset space of a connected Lie group with an invariant Finsler metric.\\
     Next, we prove the following lemma, necessary for further calculations.
     \begin{lemma}
     Let   $F= \dfrac{\beta^2}{\beta-\alpha}$ be a $G$-invariant infinite series metric on $G/H$. then $ \alpha  $ is a  $G$-invariant Riemannian metric and the vector field $X$ corresponding to the 1-form $\beta $ is a $G$-invariant vector field. 	
     \end{lemma}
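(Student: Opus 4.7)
The plan is to extract everything from Lemma \ref{a2} and its proof; essentially no new geometric content is needed. Since $F$ is $G$-invariant, by definition every element $g\in G$ acts on $M$ as an isometry of $(M,F)$, so $g\in I(M,F)$. By Lemma \ref{a2}, $I(M,F)$ is a (closed) subgroup of $I(M,\alpha)$, and therefore every $g\in G$ is an isometry of the Riemannian metric $\alpha$. This is exactly the statement that $\alpha$ is $G$-invariant.

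For the second assertion, I would revisit the proof of Lemma \ref{a2}: the calculation there did not merely conclude that $\psi\in I(M,\alpha)$, it also produced the identity
\begin{equation*}
d\psi_x\bigl(X|_x\bigr)=X|_{\psi(x)}
\end{equation*}
for every isometry $\psi$ of $(M,F)$ and every $x\in M$ (this is the content of equation (\ref{e3}) combined with non-degeneracy of the inner product $\langle\cdot,\cdot\rangle$, since $\langle X|_x,y\rangle=\langle X|_{\psi(x)},d\psi_x(y)\rangle$ holds for all $y\in T_xM$ and $d\psi_x$ is a linear isomorphism). Applying this identity to each $g\in G$ gives $dg_x(X|_x)=X|_{g(x)}$ for all $x\in M$, which is precisely the definition of $G$-invariance of the vector field $X$ on the homogeneous space $G/H$.

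The only mildly delicate point worth double-checking is the passage from equation (\ref{e3}) of the previous lemma to the pointwise identification of $X$-values, i.e., that the equality of linear functionals $\langle X|_x,\cdot\rangle=\langle X|_{\psi(x)},d\psi_x(\cdot)\rangle$ on $T_xM$ forces $d\psi_x(X|_x)=X|_{\psi(x)}$. This follows immediately because $d\psi_x$ is an isometry of the inner product spaces $(T_xM,\alpha_x)$ and $(T_{\psi(x)}M,\alpha_{\psi(x)})$ (already established in Lemma \ref{a2}), so the adjoint is the inverse and the identity transfers cleanly. Once this is noted, the lemma follows in a couple of lines and there is no substantive obstacle; the work was already done in Lemma \ref{a2}.
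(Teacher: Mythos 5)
Your proof is correct, but it takes a somewhat different route from the paper's. The paper does not invoke Lemma \ref{a2} at this point; instead it reruns the same ``replace $y$ by $-y$, then add and subtract'' computation from that lemma, now with $\mathrm{Ad}(h)$, $h\in H$, in place of $d\psi_x$ and with $y\in\mathfrak m$: starting from $F(y)=F(\mathrm{Ad}(h)y)$ it extracts $\langle X,\mathrm{Ad}(h)y\rangle=\langle X,y\rangle$ and $\alpha(\mathrm{Ad}(h)y)=\alpha(y)$, and concludes $\mathrm{Ad}(h)X=X$. That is a statement about the data at the origin being $\mathrm{Ad}(H)$-invariant, which is the form actually used in the later $S$-curvature computations (via the stated one-to-one correspondence between invariant vector fields on $G/H$ and $\mathrm{Ad}(H)$-fixed vectors of $\mathfrak m$). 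Your argument instead works globally on $M$: since every $g\in G$ is an isometry of $(M,F)$, the proof of Lemma \ref{a2} already yields $\alpha(x,y)=\alpha(g(x),dg_x(y))$ and $dg_x\left(X\rvert_x\right)=X\rvert_{g(x)}$, which is literally the $G$-invariance of $\alpha$ and of $X$. This is shorter, avoids duplicating the computation, and proves the statement as phrased (invariance under all of $G$, not only $\mathrm{Ad}(H)$-invariance at the origin); the paper's version has the advantage of landing directly on the Lie-algebra identities it needs next. The one delicate step you flag --- passing from the equality of linear functionals $\langle X\rvert_x,\cdot\rangle=\langle X\rvert_{\psi(x)},d\psi_x(\cdot)\rangle$ to $d\psi_x\left(X\rvert_x\right)=X\rvert_{\psi(x)}$ --- is handled correctly using that $d\psi_x$ is a linear isometry for $\alpha$. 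Both arguments are sound.
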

     \begin{proof}
   Since $F$ is $G$-invariant, we have
   $$ F\left( y\right)=F\left( \text{Ad} \left(h \right) y \right), \; \; \forall \;  h \in H, \; \;  y \in \mathfrak m.  $$ 
   Applying the Lemma \ref{a1}, we get
   \begin{equation*} 
   \dfrac{\left\langle  X, y\right\rangle^2}{\left\langle  X, y\right\rangle - \alpha\left( y\right) } =\dfrac{\left\langle  X, \text{Ad}\left(h \right) y\right\rangle^2}{\left\langle  X, \text{Ad}\left(h \right) y\right\rangle - \alpha\left( \text{Ad}\left(h \right) y\right) }.
   \end{equation*}
   Simplifying the above equation, we get
   \begin{equation}{\label{e5}}
   \begin{split}
   \left\langle  X, y\right\rangle^2 \left\langle  X, \text{Ad}\left(h \right) y\right\rangle - \left\langle  X, y\right\rangle^2\alpha\left( \text{Ad}\left(h \right) y\right) \\ \; \; \; \; \; \; \; \; = \left\langle  X, \text{Ad}\left(h \right) y\right\rangle^2 \left\langle  X, y\right\rangle - \left\langle  X, \text{Ad}\left(h \right) y\right\rangle^2\alpha\left( y\right).
   \end{split} 
   \end{equation}
   Replacing $y $ by $-y$ in  equation (\ref{e5}), we get
   \begin{equation}{\label{e6}}
   \begin{split}
   \left\langle  X, y\right\rangle^2 \left\langle  X, \text{Ad}\left(h \right) y\right\rangle + \left\langle  X, y\right\rangle^2\alpha\left( \text{Ad}\left(h \right) y\right) \\ \; \; \; \; \; \; \; \; = \left\langle  X, \text{Ad}\left(h \right) y\right\rangle^2 \left\langle  X, y\right\rangle + \left\langle  X, \text{Ad}\left(h \right) y\right\rangle^2\alpha\left( y\right).
   \end{split} 
   \end{equation}
   Adding  equations (\ref{e5}) and (\ref{e6}), we get 
   \begin{equation}{\label{e7}}
   \left\langle  X, \text{Ad}\left(h \right) y\right\rangle = \left\langle  X, y\right\rangle. 
   \end{equation}
   Subtracting  equation (\ref{e6}) from equation (\ref{e5}) and using  equation (\ref{e7}), we get 
   \begin{equation}
   \alpha\left(  y\right)= \alpha\left( \text{Ad}\left(h \right) y\right).
   \end{equation}
   Therefore $ \alpha $ is a $G$-invariant Riemannian metric and $ \text{Ad}\left(h \right) X= X.$ 
     \end{proof}
 
 Next, we prove the similar results for a homogeneous Finsler space with exponential metric.
 Let $ F= \alpha \phi(s),$ where $ \phi(s)=e^s,\  s=\beta / \alpha,\ \alpha = \sqrt{a_{ij}(x)y^i y^j}$ is a Riemannian metric and $ \beta = b_i(x) y^i,$ a differential 1-form. One can easily verify that $ F= \alpha e^{\beta/ \alpha}$ satisfies Lemma \ref{existence of metric}. So, it is a Finsler $(\alpha, \beta)$-metric. In literature, this metric is known as exponential metric.
 
 \begin{lemma}{\label{exp1}}
 	
 	Let $ (M,\alpha)$ be a Riemannian space. Then the exponential Finsler metric $F= \alpha  e^{\beta/\alpha}$, $\beta=b_i y^i$, a  1-form with $\lVert \beta \rVert = \sqrt{b_i b^i} < 1$ consists of a Riemannian metric $\alpha$ alongwith a smooth vector field $X$ on $M$ with $\alpha\left( X\rvert_x\right) < 1$, $ \forall \  x \in M$, i.e.,  $$ F\left( x, y\right)= \alpha(x,y)\ e^{\left\langle  X\rvert_x, y\right\rangle/\alpha(x,y)} , \ \ x \in M, \ \ y \in T_x M,$$ 
 	where $\left\langle \ , \ \right\rangle $ is the inner product induced by the Riemannian metric $\alpha.$
 \end{lemma}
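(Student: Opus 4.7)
The plan is to mirror the proof of Lemma \ref{a1} essentially verbatim, since the construction of the vector field $X$ from the $1$-form $\beta$ depends only on the Riemannian data $\alpha$, not on the particular function $\phi(s)$ governing the $(\alpha,\beta)$-metric. The only thing that changes is the final substitution into the defining formula for $F$.

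First I would use the Riemannian metric $\alpha$ to equip $T_xM$ with the inner product $\langle u,v\rangle = a_{ij}(x)u^iv^j$, and transfer it to $T_x^*M$ via the musical isomorphism. Under this isomorphism the $1$-form $\beta = b_i(x)y^i$ corresponds to a well-defined smooth vector field $X$ on $M$, given locally by $X|_x = b^i(x)\,\partial/\partial x^i$ with $b^i = a^{ij}b_j$. Smoothness of $X$ follows from smoothness of $a^{ij}$ and $b_j$.

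Next I would perform the direct index computation
\[
\langle X|_x, y\rangle = a_{ij}b^i y^j = b_j y^j = \beta(y),
\]
and substitute this into $F(x,y) = \alpha(x,y)\,e^{\beta(y)/\alpha(x,y)}$ to obtain the stated formula
\[
F(x,y) = \alpha(x,y)\, e^{\langle X|_x, y\rangle / \alpha(x,y)}.
\]
The norm bound $\alpha(X|_x) = \sqrt{a_{ij}b^i b^j} = \sqrt{b^j b_j} = \|\beta\|_\alpha < 1$ is immediate from the hypothesis on $\beta$.

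There is no real obstacle here: the argument is purely formal linear algebra at each tangent space, together with the observation that the map $\beta \mapsto X$ is canonical and smooth. The one point worth being explicit about is that the formula holds for all $y \in T_xM$ (including $y$ where the original $(\alpha,\beta)$-expression is defined), so that the reformulation in terms of $X$ genuinely captures $F$ on the entire slit tangent bundle. Since we merely rewrite $\beta(y)$ using the inner product, this is automatic and the proof is short.
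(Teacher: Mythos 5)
Your proposal is correct and follows essentially the same route as the paper's own proof: both construct $X$ via the musical isomorphism induced by $\alpha$, verify $\left\langle X\rvert_x, y\right\rangle = \beta(y)$ by the index computation $a_{ij}b^i y^j = b_j y^j$, check $\alpha\left(X\rvert_x\right) = \lVert\beta\rVert < 1$, and substitute into the defining expression for $F$. No gaps.
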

 \begin{proof}
 	The definition of Riemannian metric assures that the bilinear form $ \left\langle u, v \right\rangle = a_{ij}u^i v^j, \ \ u, v \ \in  T_x M  $ is an inner product on $T_x M$. Further, this inner product induces an inner product on the cotangent space $ T^*_x (M).$ With the help of this inner product, we can define a linear isomorphism between  $T^*_x (M)$ and $ T_x M$.
 	Thus the 1-form $\beta$ corresponds to a smooth vector field $X$ on $M$, given by $$ X\rvert_x = b^i \dfrac{\partial}{\partial x_i}, \text{where} \ \   b^i= a^{ij} b_j.$$ 
 	Finally, we have
 	$$ \left\langle  X\rvert_x, y\right\rangle = \left\langle b^i \dfrac{\partial}{\partial x_i} \ , \  y^j \dfrac{\partial}{\partial x_j} \right\rangle = b^i y^j a_{ij} = b_j y^j = \beta(y).$$
 	Also, note that $$\alpha\left( X\rvert_x\right) = \lVert \beta \rVert < 1.$$
 \end{proof}
 	\begin{lemma}{\label{exp2}}
 	Let $ (M,F)$ be a Finsler space with  exponential  metric  $F=\alpha e^{\beta/\alpha}  $. Then the group of isometries $ I(M, F)$ of $ (M,F)$ is a closed subgroup of the group of isometries $ I(M, \alpha)$ of the  Riemannian  space $ (M,\alpha).$   	
 \end{lemma}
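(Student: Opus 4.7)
The plan is to imitate the proof of Lemma~\ref{a2}. First, I would take an arbitrary isometry $\psi$ of $(M,F)$, rewrite the isometry identity $F(x,y)=F(\psi(x),d\psi_x(y))$ via Lemma~\ref{exp1}, and exploit the substitution $y\mapsto -y$ to split the Riemannian data from the $1$-form data. Writing for brevity $a=\alpha(x,y)$, $a'=\alpha(\psi(x),d\psi_x(y))$, $b=\langle X\rvert_x,y\rangle$, and $b'=\langle X\rvert_{\psi(x)},d\psi_x(y)\rangle$, the isometry condition reads $a\,e^{b/a}=a'\,e^{b'/a'}$, with $a,a'>0$ whenever $y\neq 0$ (since $\psi$ is a diffeomorphism).

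The key observation is that the exponential structure separates variables under $y\mapsto -y$ by multiplication rather than by addition. Since $\alpha$ is absolutely homogeneous and $\langle X,\cdot\rangle$ is linear in $y$, replacing $y$ by $-y$ yields the companion equation $a\,e^{-b/a}=a'\,e^{-b'/a'}$. Multiplying these two equalities kills the exponents and produces $a^{2}=a'^{2}$, whence $a=a'$; dividing them and using $a=a'$ gives $e^{2b/a}=e^{2b'/a'}$, hence $b=b'$. This yields at once
\[
\alpha(x,y)=\alpha(\psi(x),d\psi_x(y)), \qquad \langle X\rvert_x,y\rangle=\langle X\rvert_{\psi(x)},d\psi_x(y)\rangle
\]
for every $y\in T_xM$, so $\psi$ is an $\alpha$-isometry and additionally $d\psi_x(X\rvert_x)=X\rvert_{\psi(x)}$.

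Finally, I would conclude $I(M,F)\subset I(M,\alpha)$ from the first identity, and obtain closedness from the fact that $I(M,F)$ is cut out inside $I(M,\alpha)$ by a closed condition, namely the preservation of the smooth vector field $X$ (equivalently, of the function $F$) under the natural Lie topology on $I(M,\alpha)$. No substantive obstacle is expected: the only new ingredient compared with Lemma~\ref{a2} is that one must pass from the algebraic cancellations of sums and differences used there to those of products and quotients forced on us here by the exponential $\phi(s)=e^{s}$. The parity trick under $y\mapsto -y$ is exactly what makes this substitution legal.
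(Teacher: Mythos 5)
Your argument is correct and is essentially the paper's own proof: both use the substitution $y\mapsto -y$ on the identity $\alpha e^{\beta/\alpha}=F(x,y)=F(\psi(x),d\psi_x(y))$ and then combine the two resulting equations multiplicatively to separate $\alpha(x,y)=\alpha(\psi(x),d\psi_x(y))$ from $\langle X\rvert_x,y\rangle=\langle X\rvert_{\psi(x)},d\psi_x(y)\rangle$. The only (immaterial) difference is the order of operations — you multiply first to get the $\alpha$-equality and then divide, while the paper divides first to equate the exponents and then substitutes back — and you are slightly more explicit about positivity of $\alpha$ and about why the subgroup is closed.
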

 \begin{proof}
 	Let $\psi$ be an isometry of $(M,F)$ and let $ x \in M.$ Therefore,  for every $ y \in T_x M$, we have 
 	$$ F(x,y) = F( \psi(x), d \psi_x(y)).$$
 	Applying Lemma \ref{exp1}, we get
 	\begin{equation} {\label{eexp1}}
 	\alpha(x,y)\ e^{\left\langle  X\rvert_x, y\right\rangle/\alpha(x,y)}= \alpha(\psi(x),d\psi_x(y)) \ e^{\left\langle  X\rvert_{\psi(x)}, d \psi_x(y)\right\rangle/\alpha(\psi(x),d\psi_x(y))}.
 	\end{equation}
 	Replacing $y $ by $-y$ in equation (\ref{eexp1}), we get
 	\begin{equation}{\label{eexp2}}
 	\alpha(x,y)\ e^{-\left\langle  X\rvert_x, y\right\rangle/\alpha(x,y)}= \alpha(\psi(x),d\psi_x(y)) \ e^{-\left\langle  X\rvert_{\psi(x)}, d \psi_x(y)\right\rangle/\alpha(\psi(x),d\psi_x(y))}.
 	\end{equation}
 	From   equations (\ref{eexp1}) and (\ref{eexp2}), we get
 	\begin{equation} {\label{eexp3}}
 	e^{2\left\langle  X\rvert_x, y\right\rangle/\alpha(x,y)}=  e^{2\left\langle  X\rvert_{\psi(x)}, d \psi_x(y)\right\rangle/\alpha(\psi(x),d\psi_x(y))},
 	\end{equation}
 	which further implies 
 	\begin{equation} {\label{eexp4}}
 	\dfrac{\left\langle  X\rvert_x, y\right\rangle}{\alpha(x,y)}=  \dfrac{\left\langle  X\rvert_{\psi(x)}, d \psi_x(y)\right\rangle}{\alpha(\psi(x),d\psi_x(y))}.
 	\end{equation}
 	From  equations (\ref{eexp1}) and (\ref{eexp4}), we have
 	\begin{equation} {\label{eexp5}}
 	\alpha(x,y)= \alpha(\psi(x),d\psi_x(y)).
 	\end{equation}
 	Further,	from  equations (\ref{eexp4}) and (\ref{eexp5}), we have
 	$$\left\langle  X\rvert_x, y\right\rangle=\left\langle  X\rvert_{\psi(x)}, d \psi_x(y)\right\rangle. $$
 	Therefore $ \psi $ is an isometry with respect to the Riemannian metric $\alpha$ and $ d \psi_x\left( X\rvert_x\right) = X\rvert_{\psi(x)}.$ Thus $ I(M,F)$ is a closed subgroup of $ I(M,\alpha).$ 
 \end{proof}
  Let $ (M,F)$ be a  homogeneous Finsler space with exponential Finsler metric  $F= \alpha   e^{\beta/\alpha} $, Then from the Lemma (\ref{exp2}), it is clear that the Riemannian manifold $(M, \alpha)$ is homogeneous. Further, $M$ can be written as a coset space $G/H$, where $G= I(M,F)$ is a Lie transformation group of $M$ and $H$,  the compact isotropy subgroup of $ I(M,F)$ at some point $x \in M$(\cite{DH}). Let $ \mathfrak g$ and $ \mathfrak h $ be the Lie algebras of the Lie groups $G$ and $H$ respectively. If $ \mathfrak g$ can be written as a direct sum of subspace $\mathfrak h$ and subspace $\mathfrak m$ of $\mathfrak g$   such that $\text{Ad}(h) \mathfrak m \subset \mathfrak m \ \ \forall \ h \in H,$ then  $(G/H,F)$ is called a reductive homogeneous manifold (\cite{N}). Note that a  Finsler metric $F$ can be viewed as a $G$-invariant Finsler metric on $M$. Thus, we can say that any homogeneous Finsler manifold can be written as a coset space of a connected Lie group with an invariant Finsler metric.\\
  
 Next, we first prove the following theorem, and then establish the existence of one-to-one correspondence between the invariant vector fields on $G/H$ and its subspace.
 \begin{theorem}
 	Let   $F= \alpha  e^{\beta/\alpha}$ be a $G$-invariant exponential metric on $G/H$. Then $ \alpha  $ is a  $G$-invariant Riemannian metric and the vector field $X$ corresponding to the 1-form $\beta $ is a $G$-invariant vector field. 	
 \end{theorem}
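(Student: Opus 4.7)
The plan is to mirror the argument used just above for the infinite series metric, replacing the additive/subtractive manipulation with a multiplicative/divisive one suited to the exponential. As in the reductive setup, write $\mathfrak{g} = \mathfrak{h} \oplus \mathfrak{m}$ with $\text{Ad}(h)\mathfrak{m} \subset \mathfrak{m}$ for every $h \in H$; then $G$-invariance of $F$ reads $F(y) = F(\text{Ad}(h)y)$ for all $h \in H$ and $y \in \mathfrak{m}$. Using Lemma \ref{exp1}, this becomes
\[
\alpha(y)\, e^{\langle X,\,y\rangle/\alpha(y)} \;=\; \alpha(\text{Ad}(h)y)\, e^{\langle X,\,\text{Ad}(h)y\rangle/\alpha(\text{Ad}(h)y)}.
\]

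Next I would replace $y$ by $-y$ to obtain a companion identity with the sign of the exponent flipped (note $\alpha(-y) = \alpha(y)$ and $\langle X,-y\rangle = -\langle X,y\rangle$). Multiplying the two identities cancels the exponentials and yields $\alpha(y)^2 = \alpha(\text{Ad}(h)y)^2$, giving $\text{Ad}(H)$-invariance of $\alpha$ on $\mathfrak{m}$, hence $G$-invariance of the Riemannian metric $\alpha$ on $G/H$. Dividing the two identities instead eliminates the $\alpha$-prefactor and leaves $e^{2\langle X,\,y\rangle/\alpha(y)} = e^{2\langle X,\,\text{Ad}(h)y\rangle/\alpha(\text{Ad}(h)y)}$; combining this with the just-proved $\alpha$-invariance and injectivity of the exponential gives $\langle X, y\rangle = \langle X, \text{Ad}(h)y\rangle$ for every $y \in \mathfrak{m}$.

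To upgrade this scalar identity to invariance of the vector $X$ itself, I would use the $\text{Ad}(H)$-invariance of $\langle\,,\,\rangle$ on $\mathfrak{m}$ just established and rewrite $\langle X, \text{Ad}(h)y\rangle = \langle \text{Ad}(h^{-1})X, y\rangle$; since this must equal $\langle X, y\rangle$ for all $y \in \mathfrak{m}$, nondegeneracy of the inner product forces $\text{Ad}(h)X = X$, so that $X$ is a $G$-invariant vector field on $G/H$. The only step requiring any care is the last one: it is a minor obstacle in the sense that one must first secure $\text{Ad}(H)$-invariance of $\langle\,,\,\rangle$ before using it to transfer the functional invariance of $\langle X,\cdot\rangle$ into an actual Lie-algebraic invariance of $X$, but apart from this bookkeeping the proof is entirely parallel to the infinite series case and contains no genuine analytical difficulty.
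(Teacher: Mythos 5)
Your proposal is correct and follows essentially the same route as the paper: the substitution $y \mapsto -y$ applied to $\alpha(y)\,e^{\langle X,y\rangle/\alpha(y)} = \alpha(\mathrm{Ad}(h)y)\,e^{\langle X,\mathrm{Ad}(h)y\rangle/\alpha(\mathrm{Ad}(h)y)}$, followed by combining the two identities to separate the invariance of $\alpha$ from that of $\langle X,\cdot\rangle$ (the paper divides first and then substitutes back rather than multiplying, a trivial difference). Your closing step, passing from $\langle X,y\rangle = \langle X,\mathrm{Ad}(h)y\rangle$ for all $y\in\mathfrak{m}$ to $\mathrm{Ad}(h)X = X$ via $\mathrm{Ad}(H)$-invariance and nondegeneracy of the inner product, correctly supplies a detail the paper leaves implicit.
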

 \begin{proof}
 	Since $F$ is $G$-invariant, we have
 	$$ F\left( y\right)=F\left( \text{Ad} \left(h \right) y \right), \; \; \forall \;  h \in H, \; \;  y \in \mathfrak m.  $$ 
 	Applying the Lemma \ref{exp1}, we get
 	\begin{equation} {\label{eexp13}}
 	\alpha(y)\ e^{\left\langle  X, y\right\rangle/\alpha(y)}= \alpha(\text{Ad} \left(h \right)y) \ e^{\left\langle  X, \text{Ad} \left(h \right)y\right\rangle/\alpha(\text{Ad} \left(h \right)y)}.
 	\end{equation}
 	Replacing $y $ by $-y$ in equation (\ref{eexp13}), we get
 	\begin{equation}{\label{eexp14}}
 	\alpha(y)\ e^{-\left\langle X, y\right\rangle/\alpha(y)}= \alpha(\text{Ad} \left(h \right)y) \ e^{-\left\langle  X, \text{Ad} \left(h \right)y\right\rangle/\alpha(\text{Ad} \left(h \right)y)}.
 	\end{equation}
 	From  equations (\ref{eexp13}) and (\ref{eexp14}), we get
 	\begin{equation} {\label{eexp15}}
 	e^{2\left\langle  X, y\right\rangle/\alpha(y)}=  e^{2\left\langle  X, \text{Ad} \left(h \right)y\right\rangle/\alpha(\text{Ad} \left(h \right)y)},
 	\end{equation}
 	which further implies 
 	\begin{equation} {\label{eexp16}}
 	\dfrac{\left\langle  X, y\right\rangle}{\alpha(y)}=  \dfrac{\left\langle  X, \text{Ad} \left(h \right)y\right\rangle}{\alpha(\text{Ad} \left(h \right)y)}.
 	\end{equation}
 	From  equations (\ref{eexp13}) and (\ref{eexp16}), we have
 	\begin{equation} {\label{eexp17}}
 	\alpha(y)= \alpha(\text{Ad} \left(h \right)y).
 	\end{equation}
 	Further, from  equations (\ref{eexp16}) and (\ref{eexp17}), we have
 	$$\left\langle  X, y\right\rangle=\left\langle  X, \text{Ad} \left(h \right)y\right\rangle. $$
 	Therefore $ \alpha $ is a $G$-invariant Riemannian metric and $ \text{Ad}\left(h \right) X= X.$ 
 \end{proof}
 
\begin{definition}
	A one-parameter subgroup of a Lie group $G$ is a smooth map $\phi : \mathbb{R} \longrightarrow G$ such that $\phi(0)=e$ and $\phi\left(t_1+t_2 \right)= \phi\left(t_1 \right)\phi\left(t_2 \right) \ \ \forall \ t_1,\  t_2 \in \mathbb{R},  $ where $e$ is the identity of $G$.
\end{definition}
The following result,  proved in (\cite{Homogeneous Finsler Spaces})  guarantees the existence of one-parameter subgroup of a Lie group.
\begin{theorem}
	Let $G$ be a Lie group with $\mathfrak g$ as its Lie algebra. Then for any $v \in \mathfrak g$,  there exists a unique one-parameter subgroup $\phi_v$ such that $\dot{\phi}_v (0)=v_e$, where $e$ is the identity of $G$.
\end{theorem}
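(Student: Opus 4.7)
My plan is to prove the theorem by the standard Lie-theoretic strategy of associating one-parameter subgroups with integral curves of left-invariant vector fields. The idea is to promote the tangent vector $v \in \mathfrak{g} = T_e G$ to a globally defined left-invariant vector field $\widetilde{V}$ on $G$ by setting $\widetilde{V}_g := (dL_g)_e(v)$, where $L_g : G \to G$ denotes left translation by $g$. Left-invariance of $\widetilde{V}$, namely $(dL_g)_h \widetilde{V}_h = \widetilde{V}_{gh}$, is immediate from the definition and the chain rule $dL_g \circ dL_h = dL_{gh}$.

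The next step is to apply the existence and uniqueness theorem for ODEs to obtain a maximal integral curve $\phi_v : (-\epsilon, \epsilon) \to G$ of $\widetilde{V}$ with $\phi_v(0) = e$ and $\dot{\phi}_v(t) = \widetilde{V}_{\phi_v(t)}$. Evaluating at $t = 0$ yields $\dot{\phi}_v(0) = \widetilde{V}_e = v_e$, which is the required initial condition. I would then prove the homomorphism property on the interval where both sides are defined: fix $t_1$ and consider the two curves $\sigma_1(t) = \phi_v(t_1 + t)$ and $\sigma_2(t) = \phi_v(t_1)\phi_v(t) = L_{\phi_v(t_1)}(\phi_v(t))$. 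Both satisfy $\sigma(0) = \phi_v(t_1)$, and both are integral curves of $\widetilde{V}$ (the first by time-translation, the second by left-invariance of $\widetilde{V}$). Uniqueness of integral curves forces $\sigma_1 \equiv \sigma_2$, which is the identity $\phi_v(t_1+t_2) = \phi_v(t_1)\phi_v(t_2)$.

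The main obstacle, and the heart of the argument, is extending $\phi_v$ from the small interval $(-\epsilon, \epsilon)$ to all of $\mathbb{R}$; in other words, showing that $\widetilde{V}$ is a complete vector field. The trick is to use the homomorphism property (already established locally) to translate the curve. If $\phi_v$ is defined on $(-\epsilon, \epsilon)$, I would define for any $t_0 \in \mathbb{R}$ an extension near $t_0$ by setting $\widehat{\phi}(t) := \phi_v(t_0/N)^N \cdot \phi_v(t - t_0)$ for sufficiently large $N$ (choosing $N$ so that $t_0/N$ and $t - t_0$ lie inside $(-\epsilon, \epsilon)$). Left-invariance of $\widetilde{V}$ shows this is still an integral curve, and the local uniqueness guarantees the extensions glue consistently, producing a globally defined $\phi_v : \mathbb{R} \to G$.

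Finally, uniqueness of the full one-parameter subgroup is a direct consequence of the ODE uniqueness: any homomorphism $\psi : \mathbb{R} \to G$ with $\dot\psi(0) = v_e$ is automatically an integral curve of $\widetilde{V}$ starting at $e$, since differentiating $\psi(t_0 + t) = \psi(t_0)\psi(t)$ at $t = 0$ gives $\dot\psi(t_0) = (dL_{\psi(t_0)})_e v_e = \widetilde{V}_{\psi(t_0)}$, so $\psi$ must coincide with $\phi_v$. I would present the argument in this order (left-invariant extension, local integral curve, homomorphism identity, global extension, uniqueness), since each step feeds cleanly into the next.
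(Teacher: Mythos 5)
Your proof is correct and is the standard Lie-theoretic argument: extend $v$ to a left-invariant vector field, take its integral curve through $e$, derive the homomorphism law from uniqueness of integral curves, use that law to extend to all of $\mathbb{R}$, and get uniqueness by showing any smooth one-parameter subgroup with the given initial velocity is itself an integral curve. The paper does not prove this theorem at all — it merely cites Deng's monograph, where essentially this same argument appears — so there is nothing to contrast with; your write-up fills in what the paper leaves to the reference.
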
 
	\begin{definition}
	The exponential map $\exp: \mathfrak g \longrightarrow G$ is defined by $$\exp(tv)= \phi_v(t), \ \ \forall \ t \in \mathbb R.$$
	\end{definition}
We can identify the tangent space $T_{eH}\left( G/H\right) $ of $G/H$ at the origin $eH=H$ with $\mathfrak m$ through the following map:
$$
\mathfrak m \longrightarrow T_{eH}\left( G/H\right) $$
$$ \ \ \ \ \ \ \ \ \ \ v \longrightarrow \dfrac{d}{dt}\left( \exp(tv)H\right) \rvert_{t=0}. $$
Observe that for any $v \in \mathfrak g$, the vector field $ \tilde{v}= \dfrac{d}{dt}\left( \exp(tv)H\right) \rvert  _{t=0}  $ is called the fundamental Killing vector field generated by $v$ (\cite{KN}). 
The following proposition, proved in (\cite{Invariant}),  gives a complete description of invariant vector fields.
\begin{proposition}
	There exists a one to one correspondence between the set of invariant vector fields on $G/H $ and the subspace $$ V= \left\lbrace v  \in \mathfrak m : \text{Ad} \left( h\right) v= v, \; \forall \; h  \in H \right\rbrace. $$
	\end{proposition}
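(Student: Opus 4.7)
The plan is to exhibit an explicit bijection between $G$-invariant vector fields on $G/H$ and the fixed subspace $V \subset \mathfrak{m}$, using the identification $T_{eH}(G/H) \cong \mathfrak{m}$ given just above the statement. Throughout, let $L_g$ denote the action of $g \in G$ on $G/H$ by left translation, and note that $L_h$ fixes the origin $eH$ whenever $h \in H$.

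First, I would define the map from invariant vector fields to $V$ by evaluation at the origin. Given a $G$-invariant vector field $\tilde{X}$ on $G/H$, set $v := \tilde{X}_{eH} \in T_{eH}(G/H) \cong \mathfrak{m}$. To check $v \in V$, apply invariance under $L_h$ with $h \in H$: since $L_h(eH)=eH$, we have $d(L_h)_{eH}\,\tilde{X}_{eH} = \tilde{X}_{eH}$. Under the identification $T_{eH}(G/H) \cong \mathfrak{m}$, the differential $d(L_h)_{eH}$ corresponds to $\text{Ad}(h)|_{\mathfrak{m}}$, which is well-defined thanks to the reductive decomposition $\mathfrak{g} = \mathfrak{h} \oplus \mathfrak{m}$ with $\text{Ad}(h)\mathfrak{m} \subset \mathfrak{m}$. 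Hence $\text{Ad}(h)v = v$ for every $h \in H$, so $v \in V$.

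Conversely, given $v \in V$, define a vector field on $G/H$ by $\tilde{X}_{gH} := d(L_g)_{eH}(v)$. The essential check is well-definedness: if $gH = g'H$, then $g' = gh$ for some $h \in H$, and by the chain rule
\begin{equation*}
d(L_{gh})_{eH}(v) = d(L_g)_{eH}\bigl(d(L_h)_{eH}(v)\bigr) = d(L_g)_{eH}\bigl(\text{Ad}(h)v\bigr) = d(L_g)_{eH}(v),
\end{equation*}
using $v \in V$. Smoothness follows because $g \mapsto d(L_g)_{eH}(v)$ is smooth on $G$ and descends to the quotient. $G$-invariance is immediate: for any $g_0 \in G$, $d(L_{g_0})\tilde{X}_{gH} = d(L_{g_0 g})_{eH}(v) = \tilde{X}_{g_0 gH}$.

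The two constructions are mutual inverses: starting from $v \in V$, the associated field evaluated at $eH$ gives $d(L_e)_{eH}(v) = v$; starting from an invariant $\tilde{X}$, the field reconstructed from $v = \tilde{X}_{eH}$ satisfies $d(L_g)_{eH}(\tilde{X}_{eH}) = \tilde{X}_{gH}$ by invariance. The only step requiring a small verification is the identification of $d(L_h)_{eH}$ on $T_{eH}(G/H)$ with $\text{Ad}(h)$ on $\mathfrak{m}$; this is obtained by differentiating $L_h(\exp(tv)H) = \exp(t\,\text{Ad}(h)v)\,hH = \exp(t\,\text{Ad}(h)v)H$ at $t=0$, and is the only point where the reductive hypothesis on the decomposition is genuinely used.
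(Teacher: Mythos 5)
Your proof is correct and is essentially the standard argument: evaluation at the origin sends an invariant field to an $\mathrm{Ad}(H)$-fixed vector, and the inverse is the push-forward $\tilde{X}_{gH}=d(L_g)_{eH}(v)$, with well-definedness resting on the identification of $d(L_h)_{eH}$ with $\mathrm{Ad}(h)|_{\mathfrak m}$ supplied by the reductive splitting. The paper itself gives no proof of this proposition --- it only cites the reference \cite{Invariant}, where the same construction is carried out --- so there is nothing to contrast with beyond noting that your argument is the expected one.
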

\section{$S$-Curvature}
The notion of $S$-curvature of a Finsler space is closely  associated with a volume form. There are mainly two important volume forms  in Finsler geometry, namely : the Busemann-Hausdorff volume form and the Holmes-Thompson volume form.\\
The  Busemann-Hausdorff volume form $ dV_{BH}=\sigma_{_{BH}}(x)dx$ is given by
$$ \sigma_{_{BH}}(x)= \dfrac{Vol\left( B^n\right) }{Vol\left\lbrace \left( y^i\right) \in \mathbb{R}^n \ \colon\  F\left( x, y^i\frac{\partial}{\partial x^i}\right) < 1 \right\rbrace }.$$
The Holmes-Thompson volume form $ dV_{HT}=\sigma_{_{HT}}(x)dx$ is given by
$$ \sigma_{_{HT}}(x)= \dfrac{1}{Vol\left( B^n\right)} \int_{\left\lbrace \left( y^i\right) \in \mathbb{R}^n \ \colon\  F\left( x, y^i\frac{\partial}{\partial x^i}\right) < 1 \right\rbrace }det\left( g_{ij}\right) dy  .$$
Particularly, if $F= \sqrt{g_{ij}(x)y^i y^j}$ is a Riemannian metric, then both the volume forms are equal to the Riemannian volume form, i.e,  $ dV_{HT}=dV_{BH}=\sqrt{det\left( g_{ij}(x) \right) } dx$.\\ 
Next, consider a function $$ T(s)= \phi \left( \phi-s \phi'\right)^{n-2}\left\lbrace  \left( \phi-s \phi'\right) + \left( b^2-s^2\right) \phi'' \right\rbrace . $$
and let $dV_{\alpha}= \sqrt{det\left( a_{ij} \right) } dx $  be the Riemannian volume form of $\alpha.$ Then the volume form $dV= dV_{BH}$ or $dV_{HT}$ is given by $dV= f(b) dV_{\alpha}$, where 
$$
f(b)=  \begin{cases}
\dfrac{\int_{0}^{\pi} \sin^{n-2}t \ dt}{\int_{0}^{\pi}\dfrac{\sin^{n-2} t}{\phi\left( b \cos t\right)^n } \ dt}, & \text{if}\ \  dV= dV_{BH}, \\ \\
\dfrac{ \int_{0}^{\pi}\left( \sin^{n-2}\right) T\left( b \cos t\right) \ dt }{\int_{0}^{\pi} \sin^{n-2} t \ dt}, & \text{if}\ \  dV= dV_{HT}.
\end{cases} 
$$
Cheng and Shen (\cite{CS}) have derived the formula for the $S$-curvature of  $(\alpha, \beta)$-metric in a local coordinate system, given by
\begin{equation}{\label{eqS}}
S= \left( 2\psi- \dfrac{f'(b)}{bf(b)}\right)\left(r_0+s_o \right) - \dfrac{\Phi}{2 \alpha \Delta^2}\bigg( r_{_{00}}-2\alpha  Qs_{_{0}}\bigg),   
\end{equation}
where
\begin{align*}
Q&= \dfrac{\phi'}{\phi-s \phi'} \ ,\\
\Delta &= 1+sQ+\left(b^2-s^2\right)Q'\ ,\\
\Phi&= -\left( Q-sQ'\right) \left( n\Delta + 1 + sQ\right)- \left(b^2-s^2 \right) \left(1+sQ \right)Q'',\\
\psi&= \dfrac{Q'}{2 \Delta},\\
r_{ij}&=\dfrac{1}{2} \left(b_{i\mid j}+b_{j \mid i} \right), \\
s_{ij}&= \dfrac{1}{2} \left(b_{i\mid j}-b_{j \mid i} \right), \\
r_j &= b^ir_{ij}\ , \ \ s_j= b^i s_{ij} , \\
r_0 &=r_i y^i \ , \ \ s_0= s_i y^i\ , \ \ , r_{00}= r_{ij} y^i y^j.
\end{align*} 
Also, in (\cite{CS}), it is proved that if the Riemannian length $b$ is constant, then $r_0+s_0=0$. Hence, in this case,  $S$-curvature of  an $(\alpha, \beta)$-metric $F= \alpha \phi(s), \ s=\dfrac{\beta}{\alpha}$ is given by  
\begin{equation}{\label{e9}}
S= - \dfrac{\Phi}{2 \alpha \Delta^2}\bigg(r_{_{00}}-2 \alpha Qs_{_{0}} \bigg). 
\end{equation} 

In case of a homogeneous Finsler space, $b$ is constant. Therefore, the $S$-curvature of a homogeneous Finsler space with $(\alpha, \beta)$-metric can be expressed by the equation  (\ref{e9}).
Since the Finsler space is homogeneous, it is sufficient to  compute $S$-curvature of a homogeneous Finsler space with$(\alpha,\beta)$-metric at origin $H$ as follows:\\

Let $v$ be a $G$-invariant vector field in $\mathfrak m$ corresponding to 1-form $\beta$ with length $c= \lvert v \rvert$. Also, let $\{v_1, v_2,..., v_n\}$ be an orthonormal basis of $\mathfrak m$ such that $v_n= \dfrac{v}{c}$. Then there exists a neighbourhood $N$ of origin $eH=H$ in $G/H$ such that the map 
$$ \left( \exp x^1 v_1 \exp x^2 v_2 ... \exp x^n v_n\right)H \longmapsto \left(x^1,  x^2, ... ,  x^n\right)   $$ 
defines a local coordinate system on $N$(\cite{H}).
In \cite{S-cur D}, it is proved that $\tilde v= c \dfrac{\partial}{\partial x^n}.$ Now, we calculate $b_i.$
$$ b_i= \beta \left( \dfrac{\partial}{\partial x^i}\right)=\left\langle \tilde v, \dfrac{\partial}{\partial x^i} \right\rangle = c \left\langle \dfrac{\partial}{\partial x^n}, \dfrac{\partial}{\partial x^i} \right\rangle.  $$
Clearly, $b_n=c$ and all other $b_i=0 \ \forall \  i\neq n$ at the origin.\\
Further, 
\begin{align*}
\dfrac{\partial b_i}{\partial x^j} &= c \dfrac{\partial}{\partial x^j} \left\langle \dfrac{\partial}{\partial x^n}, \dfrac{\partial}{\partial x^i} \right\rangle\\
&= c\left( \left\langle \nabla_{\dfrac{\partial}{\partial x^j}} \dfrac{\partial}{\partial x^n}, \dfrac{\partial}{\partial x^i} \right\rangle+ \left\langle \dfrac{\partial}{\partial x^n},\nabla_{\dfrac{\partial}{\partial x^j}} \dfrac{\partial}{\partial x^i} \right\rangle \right), 
\end{align*}
and
\begin{align*}
\dfrac{\partial b_j}{\partial x^i} &= c \dfrac{\partial}{\partial x^i} \left\langle \dfrac{\partial}{\partial x^n}, \dfrac{\partial}{\partial x^j} \right\rangle\\
&= c\left( \left\langle \nabla_{\dfrac{\partial}{\partial x^i}} \dfrac{\partial}{\partial x^n}, \dfrac{\partial}{\partial x^j} \right\rangle+ \left\langle \dfrac{\partial}{\partial x^n},\nabla_{\dfrac{\partial}{\partial x^i}} \dfrac{\partial}{\partial x^j} \right\rangle \right). 
\end{align*}
The following formula proved in (\cite{S-cur D}), is required for further computations:
$$\Gamma^{l}_{ij}(H) = \dfrac{1}{2}\bigg(-\left\langle  \left[ v_i, v_j\right]_{\mathfrak m}, v_l \right\rangle  + \left\langle \left[ v_l, v_i\right]_{\mathfrak m}, v_j \right\rangle + \left\langle \left[ v_l, v_j\right]_{\mathfrak m}, v_i \right\rangle\bigg), \ \ i\geq j. $$
Next,
\begin{align*}
s_{ij}\left( H\right)&= \dfrac{1}{2} \left(b_{i\mid j}-b_{j \mid i} \right) \\
&=  \dfrac{1}{2} \left( \dfrac{\partial b_i}{\partial x^j}- b_k \Gamma^{k}_{ij}-\dfrac{\partial b_j}{\partial x^i}+ b_k \Gamma^{k}_{ji}\right) \\
&= \dfrac{c}{2}\left( \left\langle \nabla_{\dfrac{\partial}{\partial x^j}} \dfrac{\partial}{\partial x^n}, \dfrac{\partial}{\partial x^i} \right\rangle+ \left\langle \dfrac{\partial}{\partial x^n},\nabla_{\dfrac{\partial}{\partial x^j}} \dfrac{\partial}{\partial x^i} \right\rangle -\left\langle \nabla_{\dfrac{\partial}{\partial x^i}} \dfrac{\partial}{\partial x^n}, \dfrac{\partial}{\partial x^j} \right\rangle- \left\langle \dfrac{\partial}{\partial x^n},\nabla_{\dfrac{\partial}{\partial x^i}} \dfrac{\partial}{\partial x^j} \right\rangle\right) \\
%&= \dfrac{c}{2}\left( \left\langle \nabla_{\dfrac{\partial}{\partial x^j}} \dfrac{\partial}{\partial x^n}, \dfrac{\partial}{\partial x^i} \right\rangle - \left\langle \nabla_{\dfrac{\partial}{\partial x^i}} \dfrac{\partial}{\partial x^n}, \dfrac{\partial}{\partial x^j} \right\rangle + \left\langle \dfrac{\partial}{\partial x^n}, \nabla_{\dfrac{\partial}{\partial x^j}} \dfrac{\partial}{\partial x^i} -\nabla_{\dfrac{\partial}{\partial x^i}} \dfrac{\partial}{\partial x^j} \right\rangle \right) \\
&= \dfrac{c}{2}\left(  \left\langle \nabla_{\dfrac{\partial}{\partial x^n}} \dfrac{\partial}{\partial x^j}, \dfrac{\partial}{\partial x^i} \right\rangle - \left\langle \nabla_{\dfrac{\partial}{\partial x^n}} \dfrac{\partial}{\partial x^i}, \dfrac{\partial}{\partial x^j} \right\rangle + \left\langle \dfrac{\partial}{\partial x^n}, \left[ \dfrac{\partial}{\partial x^j}, \dfrac{\partial}{\partial x^i}\right] \right\rangle \right) \\
&= \dfrac{c}{2}\left( \left\langle \Gamma ^{k} _{nj} \ \dfrac{\partial}{\partial x^k}, \dfrac{\partial}{\partial x^i}\right\rangle - \left\langle \Gamma ^{k} _{ni} \  \dfrac{\partial}{\partial x^k}, \dfrac{\partial}{\partial x^j}\right\rangle \right) 
\end{align*}
\begin{align*}
&= \dfrac{c}{2}\left( \Gamma^{i}_{nj}- \Gamma^{j}_{ni}\right) \\
&= \dfrac{c}{4}\biggl\{ \bigg( -\left\langle \left[ v_n, v_j\right]_{\mathfrak m}, v_i \right\rangle + \left\langle \left[ v_i, v_n\right]_{\mathfrak m}, v_j \right\rangle + \left\langle \left[ v_i, v_j\right]_{\mathfrak m}, v_n \right\rangle  \bigg) \\
& \; \; \; \; \; \; \; \ \ \ - \bigg( -\left\langle  \left[ v_n, v_i\right]_{\mathfrak m}, v_j \right\rangle  + \left\langle \left[ v_j, v_n\right]_{\mathfrak m}, v_i \right\rangle + \left\langle \left[ v_j, v_i\right]_{\mathfrak m}, v_n \right\rangle  \bigg) \biggr\}\\
%&= \dfrac{c}{4}\bigg<2\left[ v_i, v_j\right] _{\mathfrak m}, v_n \bigg>\\
&= \dfrac{c}{2}\bigg<\left[ v_i, v_j\right] _{\mathfrak m}, v_n \bigg>.
\end{align*}
Further,
$$
s^i_{\ j}\left( H\right) = a^{ik}\left( H\right)s_{kj}\left( H\right)
= \sum_{k=1}^{n} \delta_{ik} s_{kj}\left( H\right) 
= s_{ij}\left( H\right)  $$
and 
$$ s_i\left( H\right)= b_l\left( H\right) s^l_{\ i}\left( H\right) = cs^n_{\ i}\left( H\right)= cs_{ni}\left( H\right). $$
Therefore, for $y= y^i v_{i} \in \mathfrak m$, we have
\begin{align*}
s_{_{0}}\left( y\right) &= s_i\left( H\right) y^i\\
&=c s_{ni}\left( H\right) y^i\\
&=\dfrac{c^2}{2} y^i \bigg<\left[ v_n, v_i\right] _{\mathfrak m}, v_n \bigg>\\
&= \dfrac{1}{2}  \bigg<\left[c v_n, y^i v_i\right] _{\mathfrak m}, c v_n \bigg>\\
&= \dfrac{1}{2}  \bigg<\left[v, y\right] _{\mathfrak m}, v \bigg>.
\end{align*}
Further,
\begin{align*}
r_{ij}\left( H\right) &=\dfrac{1}{2} \left(b_{i\mid j}+b_{j \mid i} \right) \\
&=  \dfrac{1}{2} \left( \dfrac{\partial b_i}{\partial x^j}- b_k \Gamma^{k}_{ij}+\dfrac{\partial b_j}{\partial x^i}- b_k \Gamma^{k}_{ji}\right) \\ 
&= \dfrac{1}{2} \left( \dfrac{\partial b_i}{\partial x^j}+\dfrac{\partial b_j}{\partial x^i}- 2b_n \Gamma^{n}_{ij}\right) \\ 
&= \dfrac{1}{2} \left( \dfrac{\partial b_i}{\partial x^j}+\dfrac{\partial b_j}{\partial x^i}\right)- c \Gamma^{n}_{ij}
\end{align*}
\begin{align*}
&= \dfrac{c}{2} \biggl\{  \left\langle \nabla_{\dfrac{\partial}{\partial x^j}} \dfrac{\partial}{\partial x^n}, \dfrac{\partial}{\partial x^i} \right\rangle+ \left\langle \dfrac{\partial}{\partial x^n},\nabla_{\dfrac{\partial}{\partial x^j}} \dfrac{\partial}{\partial x^i} \right\rangle\\
 &\ \ \ \ \ \ \ \ +   \left\langle \nabla_{\dfrac{\partial}{\partial x^i}} \dfrac{\partial}{\partial x^n}, \dfrac{\partial}{\partial x^j} \right\rangle+ \left\langle \dfrac{\partial}{\partial x^n},\nabla_{\dfrac{\partial}{\partial x^i}} \dfrac{\partial}{\partial x^j} \right\rangle \biggr\}- c \Gamma^{n}_{ij}\\
% &= \dfrac{c}{2} \left( \left\langle \nabla_{\dfrac{\partial}{\partial x^n}} \dfrac{\partial}{\partial x^j}, \dfrac{\partial}{\partial x^i} \right\rangle +\left\langle \nabla_{\dfrac{\partial}{\partial x^n}} \dfrac{\partial}{\partial x^i}, \dfrac{\partial}{\partial x^j} \right\rangle + 2\left\langle \dfrac{\partial}{\partial x^n},\nabla_{\dfrac{\partial}{\partial x^i}} \dfrac{\partial}{\partial x^j} \right\rangle\right) - c \Gamma^{n}_{ij}\\
 &= \dfrac{c}{2} \left( \Gamma^{i}_{nj} + \Gamma^{j}_{ni} + 2\Gamma^{n}_{ij}\right) - c \Gamma^{n}_{ij} \\
 &= \dfrac{c}{2} \left( \Gamma^{i}_{nj} + \Gamma^{j}_{ni}\right) \\
 &=\dfrac{c}{4}\biggl\{ \bigg( -\left\langle \left[ v_n, v_j\right]_{\mathfrak m}, v_i \right\rangle + \left\langle \left[ v_i, v_n\right]_{\mathfrak m}, v_j \right\rangle + \left\langle \left[ v_i, v_j\right]_{\mathfrak m}, v_n \right\rangle  \bigg) \\
 & \; \; \; \; \; \; \; \ \ \ +\bigg( -\left\langle  \left[ v_n, v_i\right]_{\mathfrak m}, v_j \right\rangle  + \left\langle \left[ v_j, v_n\right]_{\mathfrak m}, v_i \right\rangle + \left\langle \left[ v_j, v_i\right]_{\mathfrak m}, v_n \right\rangle  \bigg) \biggr\} \\
% &= \dfrac{c}{4}\left( -2\bigg<\left[ v_n, v_i\right] _{\mathfrak m}, v_j \bigg>-2\bigg<\left[ v_n, v_j\right] _{\mathfrak m}, v_i \bigg>\right) \\
 &= -\dfrac{c}{2}\left( \bigg<\left[ v_n, v_i\right] _{\mathfrak m}, v_j \bigg>+\bigg<\left[ v_n, v_j\right] _{\mathfrak m}, v_i \bigg>\right) .
\end{align*}
Therefore,
\begin{align*}
r_{_{00}} &= r_{ij} y^i y^j\\
&= -\dfrac{c}{2}\left( \bigg<\left[ v_n, v_i\right] _{\mathfrak m}, v_j \bigg>+\bigg<\left[ v_n, v_j\right] _{\mathfrak m}, v_i \bigg>\right) y^i y^j \\
&= -\dfrac{1}{2}\left( \bigg<\left[ c v_n, y^i v_i\right] _{\mathfrak m}, y^j v_j \bigg>+\bigg<\left[ c v_n, y^j v_j\right] _{\mathfrak m}, y^i v_i \bigg>\right) \\
&= -\dfrac{1}{2}\left( \bigg<\left[ v, y\right] _{\mathfrak m}, y \bigg>+\bigg<\left[ v, y\right] _{\mathfrak m}, y \bigg>\right)\\
&= - \bigg<\left[ v, y\right] _{\mathfrak m}, y \bigg>.
\end{align*}
Finally, substituting the values of $s_{_{0}}$ and $r_{_{00}}$ in the equation (\ref{e9}), we obtain the formula for $S$-curvature of  a homogeneous Finsler space with $(\alpha, \beta)$-metric.\\
The above discussion can be summarized in the following theorem:
\begin{theorem}{\label{thmS}}
	
	Let $F= \alpha \phi(s)$   be a $G$-invariant $(\alpha, \beta)$-metric on the reductive  homogeneous Finsler space $G/H$ with a decomposition of the Lie algebra $\mathfrak{g} = \mathfrak{h} +\mathfrak{m}$. Then the $S$-curvature  is given by 
	\begin{equation}{\label{e10}}
	S(H,y)=  \dfrac{\Phi}{2 \alpha \Delta^2}\left(\bigg<\left[ v, y\right] _{\mathfrak m}, y \bigg> + \alpha Q  \bigg<\left[v, y\right] _{\mathfrak m}, v \bigg>\right),
	\end{equation}
	where $v \in \mathfrak{m}$ corresponds to the 1-form $\beta$ and $\mathfrak{m}$ is identified with the tangent space $T_{H}\left( G/H\right) $ of $G/H$ at the origin $H$.
\end{theorem}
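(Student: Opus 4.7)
The plan is to reduce the Cheng--Shen local coordinate formula (\ref{eqS}) to the homogeneous setting and then evaluate every ingredient at the origin $H$ using only Lie-algebra data on $\mathfrak m$. Since the invariant vector field $v$ corresponding to $\beta$ has constant Riemannian length $c$ on the homogeneous space $(M,\alpha)$, the Cheng--Shen identity $r_0 + s_0 = 0$ applies, collapsing (\ref{eqS}) to the simpler form (\ref{e9}). By $G$-invariance of $F$ and of the Levi-Civita data of $\alpha$, it suffices to compute $r_{00}$ and $s_0$ at $H$ for an arbitrary $y = y^i v_i \in \mathfrak m \cong T_H(G/H)$.

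To make this concrete, I would fix an orthonormal basis $\{v_1,\dots,v_n\}$ of $\mathfrak m$ with $v_n = v/c$ and introduce the local coordinate system $(x^1,\dots,x^n) \mapsto (\exp x^1 v_1)\cdots(\exp x^n v_n) H$ on a neighbourhood of $H$. Using the identification $\tilde v = c\,\partial/\partial x^n$ from \cite{S-cur D}, the 1-form components $b_i = \beta(\partial/\partial x^i) = c\langle \partial/\partial x^n,\partial/\partial x^i\rangle$ evaluate at the origin to $b_i(H) = c\,\delta_{in}$, so raising indices at $H$ is trivial and $b^i(H) = c\,\delta^{in}$.

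The computational heart of the argument is the Koszul-type formula
\[ \Gamma^{l}_{ij}(H) = \tfrac{1}{2}\bigl(-\langle [v_i,v_j]_{\mathfrak m}, v_l\rangle + \langle [v_l,v_i]_{\mathfrak m}, v_j\rangle + \langle [v_l,v_j]_{\mathfrak m}, v_i\rangle\bigr) \]
from \cite{S-cur D}. I would expand $b_{i\mid j} = \partial b_i/\partial x^j - b_k\Gamma^k_{ij}$, use metric compatibility of the Levi-Civita connection of $\alpha$ to rewrite the coordinate derivatives of $\langle \partial/\partial x^n,\partial/\partial x^i\rangle$, and observe that $b_k(H) = c\,\delta_{kn}$ kills every Christoffel term except $\Gamma^n_{ij}$. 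Antisymmetrizing versus symmetrizing in $(i,j)$ and substituting the Koszul formula then yield
\[ s_{ij}(H) = \tfrac{c}{2}\langle [v_i,v_j]_{\mathfrak m}, v_n\rangle, \qquad r_{ij}(H) = -\tfrac{c}{2}\bigl(\langle [v_n,v_i]_{\mathfrak m}, v_j\rangle + \langle [v_n,v_j]_{\mathfrak m}, v_i\rangle\bigr). \]
Contracting with $y = y^i v_i$ and absorbing the factors of $c$ into $v = cv_n$ produces $s_0(y) = \tfrac{1}{2}\langle [v,y]_{\mathfrak m}, v\rangle$ and $r_{00}(y) = -\langle [v,y]_{\mathfrak m}, y\rangle$; substituting both into (\ref{e9}) gives (\ref{e10}) immediately.

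The step I expect to require the most care is the bookkeeping around the derivatives of $\langle \partial/\partial x^n,\partial/\partial x^i\rangle$ at $H$: one must consistently re-express the resulting covariant derivatives of coordinate vector fields through the Koszul formula and systematically exploit both the orthonormality at $H$ and the normalization $v_n = v/c$, since without that the cancellations producing such clean bracket expressions for $s_{ij}$ and $r_{ij}$ are easy to mishandle. Once those cancellations are in hand, the rest of the proof is a direct substitution into (\ref{e9}).
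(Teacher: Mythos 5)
Your proposal is correct and follows essentially the same route as the paper: reduce to the constant-length case $r_0+s_0=0$ so that (\ref{eqS}) collapses to (\ref{e9}), set up the exponential coordinates with $v_n=v/c$ and $b_i(H)=c\,\delta_{in}$, use the Christoffel formula at $H$ from \cite{S-cur D} to obtain $s_{ij}(H)=\tfrac{c}{2}\langle[v_i,v_j]_{\mathfrak m},v_n\rangle$ and $r_{ij}(H)=-\tfrac{c}{2}\bigl(\langle[v_n,v_i]_{\mathfrak m},v_j\rangle+\langle[v_n,v_j]_{\mathfrak m},v_i\rangle\bigr)$, and contract to get $s_0=\tfrac12\langle[v,y]_{\mathfrak m},v\rangle$ and $r_{00}=-\langle[v,y]_{\mathfrak m},y\rangle$ before substituting into (\ref{e9}). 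This is exactly the computation carried out in the text preceding Theorem \ref{thmS}, so no further comparison is needed.
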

Here, we remark that the theorem \ref{thmS} is modified vesion of the theorem 2.1 given in (\cite{DH}). 	In reffered theorem, the authors have wrongly taken $c$, which should not be there.

%\section{$S$-curvature of homogeneous Finsler space with infinite series $(\alpha, \beta)$-metric}
Next, we compute the $S$-curvature of homogeneous Finsler space with infinite series $(\alpha, \beta)$-metric: $F= \dfrac{\beta^2}{\beta-\alpha}=\alpha \left(\dfrac{s^2}{s-1} \right)= \alpha \phi(s), \text{where} \ \ \phi(s)=  \dfrac{s^2}{s-1}.$ \\
%$\phi'= \dfrac{s \left(s-2 \right) }{\left(s-1 \right)^2 }$
For infinite series $(\alpha, \beta)$-metric , the identites given in the equation (\ref{eqS}) reduce to the following:
\begin{align*}
Q&= \dfrac{\phi'}{\phi-s \phi'}= 1-\dfrac{2}{s},\\
Q' &= \dfrac{2}{s^2},\\
Q'' &=-\dfrac{4}{s^3},\\
\Delta&= 1+sQ+\left(b^2-s^2\right)Q'\\
&=1+ s\left( \dfrac{s-2}{s}\right) + \left( b^2-s^2\right) \dfrac{2}{s^2}\\
%&= s-3+ \dfrac{2b^2}{s^2}\\
&= \dfrac{s^3-3s^2+2b^2}{s^2}, \\
\Phi&= -\left( Q-sQ'\right) \left( n\Delta + 1 + sQ\right)- \left(b^2-s^2 \right) \left(1+sQ \right)Q''\\
&= -\left( 1-\dfrac{2}{s}- s\dfrac{2}{s^2}\right)\left\lbrace n \left( \dfrac{s^3-3s^2+2b^2}{s^2}\right) +1 + s\left( \dfrac{s-2}{s}\right)\right\rbrace \\
&\ \ \ \ \ \ -\left( b^2-s^2\right)\left\lbrace 1 + s\left( \dfrac{s-2}{s}\right)\right\rbrace \left( -\dfrac{4}{s^3}\right) \\
%&= -\left( 1-\dfrac{4}{s}\right)\left\lbrace \dfrac{n\left( s^3-3s^2+2b^2\right)+s^3-s^2 }{s^2}\right\rbrace+\dfrac{4}{s^3} \left( b^2-s^2\right) \left( s-1\right)\\
%&=   \dfrac{1}{s^3}\bigg[ \left( 4-s\right)\biggl\{ \left(n+1 \right)s^3-\left(3n+1 \right) s^2+2nb^2  \biggr\}+4\left( b^2-s^2\right)\left(s-1 \right)    \bigg] \\
%&= \dfrac{1}{s^3}\biggl\{ 4\left( n+1\right)s^3 -4\left( 3n+1\right)s^2 +8nb^2-\left(n+1 \right)s^4+\left( 3n+1\right)s^3-2nb^2s \\
%&\ \ \ \ \ \ \ \ \ \ \ +4b^2s-4b^2-4s^3+4s^2\biggr\}  \\
&= \dfrac{1}{s^3}\biggl\{ -\left( n+1\right)s^4 +\left(7n+1 \right)s^3 -12ns^2+2\left(2-n \right)b^2 s+4\left( 2n-1\right)b^2  \biggr\}.
\end{align*} 
Substituting the above values in the equation (\ref{e10}), we obtain the formula for   $S$-curvature of the homogeneous Finsler space with infinite series $(\alpha, \beta)$-metric in the form of  following theorem:
\begin{theorem}{\label{t1}}
	Let $F= \dfrac{\beta^2}{\beta-\alpha}$   be a $G$-invariant infinite series metric on the reductive  homogeneous Finsler space $G/H$ with a decomposition of the Lie algebra $\mathfrak{g} = \mathfrak{h} +\mathfrak{m}$. Then the $S$-curvature  is given by 
	\begin{equation}{\label{ee1}}
	S(H,y)= \left[\dfrac{\splitdfrac{ -\left( n+1\right)s^5 +\left(7n+1 \right)s^4 -12ns^3}{+2\left(2-n \right)b^2 s^2+4\left( 2n-1\right)b^2s }}{2\left( s^3-3s^2+2b^2\right)^2 } \right]\left(  \dfrac{1}{\alpha}\left\langle \left[ v, y\right] _{\mathfrak m}, y \right\rangle  + \left(1-\dfrac{2}{s} \right)   \left\langle \left[v, y\right] _{\mathfrak m}, v \right\rangle \right), 
	\end{equation}
	
	where $v \in \mathfrak{m}$ corresponds to the 1-form $\beta$ and $\mathfrak{m}$ is identified with the tangent space $T_{H}\left( G/H\right) $ of $G/H$ at the origin $H$.
\end{theorem}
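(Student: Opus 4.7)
The plan is to specialize the general $S$-curvature formula for a homogeneous $(\alpha,\beta)$-metric given in Theorem \ref{thmS} to the specific case $\phi(s)=s^{2}/(s-1)$, so the proof reduces to a careful but routine substitution. First I would record the relevant derivatives of $\phi$, namely $\phi'(s)=(s^{2}-2s)/(s-1)^{2}$ and $\phi''(s)=2/(s-1)^{3}$, and then plug these into the defining identities
\[
Q=\frac{\phi'}{\phi-s\phi'},\qquad \Delta=1+sQ+(b^{2}-s^{2})Q',\qquad \Phi=-(Q-sQ')(n\Delta+1+sQ)-(b^{2}-s^{2})(1+sQ)Q'',
\]
to obtain closed-form expressions for $Q,Q',Q''$ in terms of $s$ alone and for $\Delta,\Phi$ in terms of $s$ and $b$. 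These are exactly the four quantities displayed just before the statement of the theorem, so the preliminary computation is already recorded; I would only verify the arithmetic (in particular the clearing of denominators that turns $\Delta$ into $(s^{3}-3s^{2}+2b^{2})/s^{2}$ and $\Phi$ into the degree-four polynomial divided by $s^{3}$).

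Next I would insert these expressions into formula (\ref{e10}) of Theorem \ref{thmS}, which at the origin reads
\[
S(H,y)=\frac{\Phi}{2\alpha\Delta^{2}}\bigl(\langle[v,y]_{\mathfrak m},y\rangle+\alpha Q\,\langle[v,y]_{\mathfrak m},v\rangle\bigr).
\]
Using $\Delta^{2}=(s^{3}-3s^{2}+2b^{2})^{2}/s^{4}$ together with the explicit $\Phi$, the prefactor $\Phi/(2\alpha\Delta^{2})$ simplifies to a rational function in $s$ and $b$ divided by $\alpha$; multiplying numerator and denominator by the appropriate power of $s$ absorbs the $1/s^{3}$ in $\Phi$ against the $s^{4}$ coming from $\Delta^{-2}$, producing the single factor $s$ that bumps every power in the bracketed numerator of (\ref{ee1}) up by one. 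Then I would distribute the $1/\alpha$ against the two bracket terms $\langle[v,y]_{\mathfrak m},y\rangle$ and $\alpha Q\langle[v,y]_{\mathfrak m},v\rangle$, using $Q=1-2/s$ on the second term, to land exactly on the stated expression (\ref{ee1}).

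The only real obstacle is bookkeeping: tracking the powers of $s$ that arise from clearing denominators in $\Delta^{2}$ and $\Phi$, and ensuring that the factor $\alpha$ in $\alpha Q\langle[v,y]_{\mathfrak m},v\rangle$ combines correctly with the $1/\alpha$ outside to give the $(1-2/s)$ coefficient in the final answer rather than an extra $1/\alpha$. Once these exponent-counting checks are done, no further geometric input is required: the values of $\langle[v,y]_{\mathfrak m},y\rangle$ and $\langle[v,y]_{\mathfrak m},v\rangle$ have already been extracted in the derivation of Theorem \ref{thmS} via the computations of $r_{00}$ and $s_{0}$, and the reductive decomposition $\mathfrak g=\mathfrak h+\mathfrak m$ together with the correspondence between $\beta$ and the invariant vector $v\in\mathfrak m$ is guaranteed by the preceding section. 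Thus the proof is essentially a substitution, and I would end by remarking that the $(b^{2}-s^{2})$ factors and the factors of $s$ in the denominators of $Q,Q',Q''$ all conspire to produce the polynomial numerator displayed in (\ref{ee1}).
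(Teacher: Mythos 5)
Your proposal is correct and follows exactly the paper's own route: compute $Q=1-2/s$, $Q'=2/s^{2}$, $Q''=-4/s^{3}$, $\Delta=(s^{3}-3s^{2}+2b^{2})/s^{2}$ and the degree-four $\Phi$-numerator over $s^{3}$ for $\phi(s)=s^{2}/(s-1)$, then substitute into formula (\ref{e10}) of Theorem \ref{thmS}, with the $s^{4}/s^{3}$ cancellation producing the extra factor of $s$ in the numerator of (\ref{ee1}). The exponent bookkeeping and the handling of the $\alpha Q$ term that you flag as the only delicate points are exactly the steps the paper carries out.
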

Next, we compute the $S$-curvature of a homogeneous Finsler space with exponential metric : $F= \alpha  e^{\beta/\alpha}= \alpha e^s= \alpha \phi(s), \text{where} \ \ \phi(s)= e^s.$\\
For exponential metric, the identities given in the equation (\ref{eqS}) reduce to the following:
\begin{align*}
Q&= \dfrac{\phi'}{\phi-s \phi'}= \dfrac{1}{1-s},\\
Q' &= \dfrac{1}{\left( 1-s\right)^2 },\\
Q'' &=\dfrac{2}{\left( 1-s\right)^3 },
\end{align*}
\begin{align*}
\Delta&= 1+sQ+\left(b^2-s^2\right)Q' \\
&= 1+\dfrac{s}{1-s}+\dfrac{b^2-s^2}{\left( 1-s\right)^2}\\
%&= \dfrac{\left( 1-s\right)^2+s\left( 1-s\right)+b^2-s^2}{\left( 1-s\right)^2}\\
%&= \dfrac{1+s^2-2s+s-s^2+b^2-s^2}{\left( 1-s\right)^2}\\
&= \dfrac{1+b^2-s^2-s}{\left( 1-s\right)^2},\\
\Phi&= -\left( Q-sQ'\right) \left( n\Delta + 1 + sQ\right)- \left(b^2-s^2 \right) \left(1+sQ \right)Q''\\
&= -\left\lbrace \dfrac{1}{1-s}-\dfrac{s}{\left( 1-s\right)^2}\right\rbrace \left\lbrace \dfrac{n\left( 1+b^2-s^2-s\right)}{\left( 1-s\right)^2}+1+\dfrac{s}{\left( 1-s\right)}\right\rbrace \\
& \ \ \ \ -\left( b^2-s^2\right)\left\lbrace 1+\dfrac{s}{\left( 1-s\right)} \right\rbrace \dfrac{2}{\left( 1-s\right)^3}  \\
%&=\dfrac{-\left( 1-s-s\right) }{\left( 1-s\right)^2}\left\lbrace \dfrac{n\left( 1+b^2-s^2-s\right)+1+s^2-2s+s-s^2}{\left( 1-s\right)^2}\right\rbrace \\
%&\ \ \ \ -\left( b^2-s^2\right)\left( \dfrac{1-s+s}{1-s}\right) \dfrac{2}{\left( 1-s\right)^3} \\
%&= -\dfrac{1}{\left( 1-s\right)^4} \bigg[ 2\left( b^2-s^2\right)+ \left( 1-2s\right)\left\lbrace 1-s+n\left( 1+b^2-s^2-s\right)\right\rbrace  \bigg] \\
%&=-\dfrac{\left(2b^2-2s^2+1-s+n+nb^2-ns^2-ns-2s+2s^2-2ns-2nb^2s+2ns^3+2ns^2 \right)}{\left( 1-s\right)^4} \\
&= \dfrac{-\left\lbrace 2ns^3+ns^2-\left( 3+3n+2nb^2\right)s +\left( 2+n\right)b^2 +n+1\right\rbrace  }{\left( 1-s\right)^4}.
\end{align*} 
Substituting the above values in the equation (\ref{e10}), we obtain the formula for   $S$-curvature of the homogeneous Finsler space with exponential  metric. Hence we have the following theorem.
\begin{theorem}{\label{texp1}}
	Let $F= \alpha  e^{\beta/\alpha}$   be a $G$-invariant exponential metric on the reductive  homogeneous Finsler space $G/H$ with a decomposition of the Lie algebra $\mathfrak{g} = \mathfrak{h} +\mathfrak{m}$. Then the $S$-curvature is given by 
	\begin{equation}{\label{eeexp1}}
	S(H,y)= -\left[\dfrac{\splitdfrac{ 2ns^3+ns^2-\left( 3+3n+2nb^2\right)s}{+\left( 2+n\right)b^2 +n+1}}{2\left( 1+b^2-s^2-s\right)^2} \right]\left(  \dfrac{1}{\alpha}\left\langle \left[ v, y\right] _{\mathfrak m}, y \right\rangle  + \dfrac{1}{\left( 1-s\right) }   \left\langle \left[v, y\right] _{\mathfrak m}, v\right\rangle \right),
	\end{equation}
	where $v \in \mathfrak{m}$ corresponds to the 1-form $\beta$ and $\mathfrak{m}$ is identified with the tangent space $T_{H}\left( G/H\right) $ of $G/H$ at the origin $H$.
\end{theorem}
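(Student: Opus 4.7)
The plan is to obtain Theorem \ref{texp1} by specializing the general formula (\ref{e10}) of Theorem \ref{thmS} to the case $\phi(s)=e^s$. Since Theorem \ref{thmS} already captures all the Lie-theoretic content (the reductive decomposition, the identification of $\beta$ with $v\in\mathfrak m$, and the replacement of $s_0,r_{00}$ by bracket expressions), nothing more about the homogeneous structure needs to be proved; the argument reduces to computing the auxiliary scalars $Q,\Delta,\Phi$ attached to the $(\alpha,\beta)$-metric and inserting them.

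First I would compute the derivatives of $\phi(s)=e^s$: one has $\phi'=\phi''=e^s$, so
\[
\phi-s\phi'=(1-s)e^s,\qquad Q=\frac{\phi'}{\phi-s\phi'}=\frac{1}{1-s},
\]
and consequently $Q'=\frac{1}{(1-s)^2}$ and $Q''=\frac{2}{(1-s)^3}$. These expressions are defined on the relevant range because $|s|\le b<1$ by Lemma \ref{existence of metric}.

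Next I would substitute into $\Delta=1+sQ+(b^2-s^2)Q'$ and put the result over the common denominator $(1-s)^2$; a short simplification yields
\[
\Delta=\frac{1+b^2-s-s^2}{(1-s)^2}.
\]
For $\Phi=-(Q-sQ')(n\Delta+1+sQ)-(b^2-s^2)(1+sQ)Q''$, I would expand the two summands separately over the common denominator $(1-s)^4$, using $1+sQ=\frac{1}{1-s}$, $Q-sQ'=\frac{1-2s}{(1-s)^2}$, $n\Delta+1+sQ=\frac{n(1+b^2-s-s^2)+(1-s)}{(1-s)^2}$, and then collect powers of $s$. After cancellation the numerator reduces to the polynomial that appears in the statement, giving
\[
\Phi=-\frac{2ns^3+ns^2-(3+3n+2nb^2)s+(2+n)b^2+n+1}{(1-s)^4}.
\]

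Plugging these formulas for $Q$, $\Delta$, and $\Phi$ into (\ref{e10}) and simplifying the factor $\tfrac{\Phi}{2\alpha\Delta^2}$ (the $(1-s)^4$ factors in the denominators of $\Phi$ and $\Delta^2$ cancel) yields exactly (\ref{eeexp1}). The main obstacle is entirely bookkeeping: keeping the signs and the common denominator correct while expanding $\Phi$, so that the final polynomial numerator matches the stated form. No further geometric ingredient is needed beyond Theorem \ref{thmS}.
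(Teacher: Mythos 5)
Your proposal is correct and follows essentially the same route as the paper: it specializes the general formula (\ref{e10}) of Theorem \ref{thmS} to $\phi(s)=e^{s}$ by computing $Q=\tfrac{1}{1-s}$, $Q'$, $Q''$, then $\Delta=\tfrac{1+b^{2}-s-s^{2}}{(1-s)^{2}}$ and $\Phi=-\tfrac{2ns^{3}+ns^{2}-(3+3n+2nb^{2})s+(2+n)b^{2}+n+1}{(1-s)^{4}}$, and substitutes; the algebra checks out and matches (\ref{eeexp1}).
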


\begin{definition}
	An $n$-dimensional  Finsler space $(M,F)$ is said to have almost isotropic $S$-curvature if there exists a smooth function $c(x)$ on $M$ and a closed 1-form $\omega$ such that 
	$$ S(x,y)= (n+1) \bigg( c(x) F(y) + \omega(y)\bigg), \ \ x \in M, \   y\in T_x(M). $$
	In addition, if $\omega$ is zero, then   $(M,F)$ is said to have isotropic $S$-curvature.\\ 
	Also, if  $\omega$ is zero and $c(x)$ is constant, then $(M,F)$ is said to have  constant $S$-curvature.
\end{definition}
Next, we give an application of Theorem (\ref{t1}). 
\begin{theorem}
	Let $F= \dfrac{\beta^2}{\beta-\alpha}$   be a $G$-invariant infinite series metric on the reductive  homogeneous Finsler space $G/H$ with a decomposition of the Lie algebra $\mathfrak{g} = \mathfrak{h} +\mathfrak{m}$. Then $(G/H, F)$ has isotropic $S$-curvature if and only if  it has vanishing $S$-curvature.
\end{theorem}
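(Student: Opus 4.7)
The backward implication is immediate: if the $S$-curvature vanishes, then $S(x,y) = (n+1)\cdot 0\cdot F(y)$ trivially satisfies the isotropic condition with $c\equiv 0$. For the forward direction, my strategy is to start from the isotropic relation $S(x,y) = (n+1) c(x) F(y)$ and deduce that $c \equiv 0$, which immediately yields vanishing $S$-curvature.

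First I would observe that $c$ is forced to be a constant. Since $F$ is $G$-invariant by hypothesis and $S$ is $G$-invariant because $S$-curvature is an isometry invariant, the ratio $S/F = (n+1)c$ is $G$-invariant at every point where $F(y)\neq 0$; transitivity of $G$ on $M = G/H$ then forces $c$ to be constant, which I denote by $c$.

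The key step is to evaluate the identity at the particular direction $y = v$, where $v \in \mathfrak{m}$ is the $G$-invariant vector corresponding to the $1$-form $\beta$. Because the Lie bracket is antisymmetric, $[v, v]_{\mathfrak{m}} = 0$, and so both inner products $\langle [v, y]_{\mathfrak{m}}, y\rangle$ and $\langle [v, y]_{\mathfrak{m}}, v\rangle$ appearing in the formula of Theorem \ref{t1} vanish at $y = v$. The formula therefore gives $S(H, v) = 0$ outright. On the other hand, $\alpha(v) = b$ and $\beta(v) = \langle v, v\rangle = b^2$, so
$$F(v) = \dfrac{\beta(v)^2}{\beta(v) - \alpha(v)} = \dfrac{b^3}{b - 1},$$
which is nonzero because $b \in (0, 1)$. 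Substituting into the isotropic relation gives $0 = (n+1) c \cdot b^3/(b-1)$, hence $c = 0$, and therefore $S \equiv 0$ on $G/H$.

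The main obstacle I anticipate is that $v$ may lie outside the open conic domain on which $F = \beta^2/(\beta - \alpha)$ is a positive Finsler norm (in fact $F(v) < 0$ for $b\in(0,1)$). I would resolve this by noting that both sides of $S(x,y) = (n+1)c F(y)$ are rational functions of $y$ that coincide on the nonempty open Finsler cone, and so coincide on the entire complement of the zero set of their common denominator. Since $s^3 - 3s^2 + 2b^2$ and $\beta - \alpha$ are both nonzero at $y = v$, the substitution $y = v$ is algebraically legitimate, which is all that the argument requires.
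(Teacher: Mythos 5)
Your proposal is correct and follows essentially the same route as the paper: substitute $y=v$ into the $S$-curvature formula of Theorem \ref{t1}, note that $[v,v]_{\mathfrak m}=0$ kills both bracket terms so $S(H,v)=0$, observe $F(v)\neq 0$ to conclude $c(H)=0$, and propagate by homogeneity. Your additional remarks --- that $c$ must be constant by $G$-invariance, and the justification for evaluating at $y=v$ even though $v$ may lie outside the cone where $F$ is a genuine positive Finsler norm --- are refinements of points the paper passes over silently, not a different argument.
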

\begin{proof}  We only need to prove the necessary part. Suppose $G/H$ has isotropic $S$-curvature, then we have 
	$$ S(x,y)= (n+1) c(x) F(y) , \ \ x \in G/H, \   y\in T_x(G/H).$$
	Letting $x=H$ and $y = v$ in the equation (\ref{ee1}), we get $c(H)=0$ and hence $S(H,y)=0 \ \forall \ y \in T_H(G/H). $ Further, since $F$ is a homogeneous metric, we have $S=0$ everywhere. \\
	Therefore, $G/H$ has vanishing $S$-curvature.
\end{proof}
Further, we give an application of Theorem (\ref{texp1}). 
\begin{theorem}
	Let $F= \alpha  e^{\beta/\alpha}$   be a $G$-invariant exponential metric on the reductive  homogeneous Finsler space $G/H$ with a decomposition of the Lie algebra $\mathfrak{g} = \mathfrak{h} +\mathfrak{m}$. Then $(G/H, F)$ has isotropic $S$-curvature if and only if  it has vanishing $S$-curvature.
\end{theorem}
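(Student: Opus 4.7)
The plan is to mirror exactly the strategy used for the infinite series metric in the preceding theorem. The reverse implication — vanishing $S$-curvature being trivially isotropic (with $c(x)\equiv 0$) — is immediate, so I would focus on necessity.

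Assume $(G/H,F)$ has isotropic $S$-curvature, so there is a smooth function $c$ on $G/H$ with
\[
S(x,y)=(n+1)\,c(x)\,F(y), \qquad x\in G/H,\ y\in T_x(G/H).
\]
I would then specialize to the origin $x=H$ and the distinguished direction $y=v$, where $v\in\mathfrak{m}$ is the $G$-invariant vector corresponding to $\beta$. The crucial observation is that the right-hand side of the explicit $S$-curvature formula (\ref{eeexp1}) in Theorem \ref{texp1} is a linear combination of $\left\langle [v,y]_{\mathfrak m}, y\right\rangle$ and $\left\langle [v,y]_{\mathfrak m}, v\right\rangle$; at $y=v$ both inner products vanish because $[v,v]_{\mathfrak m}=0$. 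Hence $S(H,v)=0$.

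Next, I would note that at $y=v$ one has $\alpha(v)=|v|$ and $s=\beta(v)/\alpha(v)=|v|$, so $F(v)=|v|\,e^{|v|}>0$ (in the trivial case $v=0$, the form $\beta$ vanishes, the metric is Riemannian, and $S\equiv 0$ already). The isotropy equation evaluated at $(H,v)$ then forces $c(H)=0$, which in turn gives $S(H,y)=0$ for every $y\in T_H(G/H)$.

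Finally, I would upgrade $S(H,\cdot)\equiv 0$ to $S\equiv 0$ using homogeneity: since $G$ acts transitively on $M=G/H$ by isometries and $S$-curvature is preserved under isometries, for any $x\in G/H$ one picks $\psi\in G$ with $\psi(H)=x$ and obtains $S(x,u)=S(H,(d\psi_H)^{-1}u)=0$ for every $u\in T_x(G/H)$. There is no substantive obstacle here; the argument runs in parallel with the infinite-series case, the only genuine verifications being the vanishing of the bracket combination in (\ref{eeexp1}) at $y=v$ and the strict positivity of $F(v)$, both of which are immediate.
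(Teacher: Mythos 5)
Your proposal is correct and follows essentially the same route as the paper: evaluate the isotropy identity at $x=H$, $y=v$, use formula (\ref{eeexp1}) (where both brackets $\left\langle [v,v]_{\mathfrak m},v\right\rangle$ and $\left\langle [v,v]_{\mathfrak m},y\right\rangle$ vanish) to get $S(H,v)=0$, conclude $c(H)=0$ from $F(v)>0$, and then spread the vanishing over all of $G/H$ by homogeneity. You actually supply more detail than the paper does (the explicit vanishing of the bracket terms and the positivity of $F(v)$ are left implicit there), so nothing is missing.
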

\begin{proof} We only need to prove the necessary part. Suppose $G/H$ has isotropic $S$-curvature, then we have 
	$$ S(x,y)= (n+1) c(x) F(y) , \ \ x \in G/H, \   y\in T_x(G/H).$$
	Letting $x=H$ and $y = v$ in the equation (\ref{eeexp1}), we get $c(H)=0$ and hence $S(H,y)=0 \ \forall \ y \in T_H(G/H). $ Further, since $F$ is a homogeneous metric, we have $S=0$ everywhere. \\
	Therefore, $G/H$ has vanishing $S$-curvature.	
\end{proof} 

\section{Mean Berwald Curvature}
In this section, we find the mean Berwald curvature of the homogeneous Finsler space with infinite series metric and also for exponential metric. We first discuss the notion of the mean Berwald curvature (\cite{CSBOOK}) of a Finsler space (M, F). For this, let
$$ E_{ij}(x, y)= \dfrac{1}{2}\dfrac{\partial^2S(x, y)}{\partial y^i \partial y^j}.$$
The $E$-tensor is a family of symmetric forms $E_y \colon T_xM \times T_x M \longrightarrow \mathbb{R}$ defined by $E_y(u,v)= E_{ij}(x,y) u^iv^j,$ where $u=u^i \dfrac{\partial}{\partial x^i}, \ v=v^i \dfrac{\partial}{\partial x^i} \in T_x M, \ x \in M.$ Then $ E=\bigg\{ E_y \ \colon \ y \in TM \backslash \{0\}  \bigg\}  $ is called the $E$-curvature or the mean Berwald curvature.
First, we find it for a homogeneous Finsler space with infinite series metric.
To find it, we require the following computations:\\
At the origin, $a_{ij}= \delta^i_j$ and therefore $ y_{_i}=y^i.$
\begin{align*}
\alpha_{_{y^i}} &=\dfrac{y_{_i}}{\alpha},\\
\beta_{_{y^i}} &= b_i,\\
s_{_{y^i}} &= \dfrac{\partial}{\partial y^i}\left( \dfrac{\beta}{\alpha}\right) = \dfrac{b_i \alpha - s y_{_i}}{\alpha^2},\\
s_{_{y^i y^j}} &= \dfrac{\partial}{\partial y^j}\left( \dfrac{b_i \alpha - s y_{_i}}{\alpha^2}\right) \\
&= \dfrac{\alpha^2\left\lbrace b_i \dfrac{y_{_j}}{\alpha}- \left( \dfrac{b_j \alpha -s y_{_j}}{\alpha^2}\right)y_{_i} - s \delta^i_j \right\rbrace - \left( b_i \alpha -s y_{_i}\right)2 \alpha \dfrac{y_{_j}}{\alpha}  }{\alpha^4}\\
&= \dfrac{-\left( b_i y_{j}+b_j y_{i}\right) \alpha + 3s y_{i} y_j -\alpha^2 s \delta ^i_j }{\alpha ^4},\\
A &= \dfrac{\biggl\{ -\left( n+1\right)s^5 +\left(7n+1 \right)s^4 -12ns^3+2\left(2-n \right)b^2 s^2+4\left( 2n-1\right)b^2s  \biggr\}}{2\left( s^3-3s^2+2b^2\right)^2 },\\
\dfrac{\partial A}{\partial y^j}&= \left[\dfrac{\splitdfrac{\splitdfrac{\splitdfrac{\left( s^3-3s^2+2b^2\right)^2}{\left\lbrace -5\left( n+1\right)s^4 +4\left(7n+1 \right)s^3 -36ns^2+4\left(2-n \right)b^2 s+4\left( 2n-1\right)b^2 \right\rbrace }}{-\left( s^3-3s^2+2b^2\right)\left(6s^2-12s \right)}}{ \left\lbrace -\left( n+1\right)s^5 +\left(7n+1 \right)s^4 -12ns^3+2\left(2-n \right)b^2 s^2+4\left( 2n-1\right)b^2s\right\rbrace }}{2\left( s^3-3s^2+2b^2\right)^4}\right]s_{y^j}
\end{align*}
\begin{align*}
%&=\left[\dfrac{\splitdfrac{\splitdfrac{\left( s^3-3s^2+2b^2\right)}{\left\lbrace -5\left( n+1\right)s^4 +4\left(7n+1 \right)s^3 -36ns^2+4\left(2-n \right)b^2 s+4\left( 2n-1\right)b^2 \right\rbrace }}{-\left(6s^2-12s \right) \left\lbrace -\left( n+1\right)s^5 +\left(7n+1 \right)s^4 -12ns^3+2\left(2-n \right)b^2 s^2+4\left( 2n-1\right)b^2s\right\rbrace }}{2\left( s^3-3s^2+2b^2\right)^3}\right]s_{y^j}\\
%&=\left[\dfrac{\splitdfrac{\splitdfrac{\splitdfrac{\splitdfrac{-5(n+1)s^7+4(7n+1)s^6-36ns^5+4(2-n)b^2s^4+4(2n-1)b^2s^3}{+15(n+1)s^6-12(7n+1)s^5+108ns^4-12(2-n)b^2s^3-12(2n-1)b^2s^2}}{-10(n+1)b^2s^4+8(7n+1)b^2s^3-72b^2ns^2+8(2-n)b^4s+8(2n-1)b^4}}{+6(n+1)s^7-6(7n+1)s^6+72ns^5-12(2-n)b^2s^4-24(2n-1)b^2s^3}}{-12(n+1)s^6+12(7n+1)s^5-144ns^4+24(2-n)b^2s^3+48(2n-1)b^2s^2}}{2\left( s^3-3s^2+2b^2\right)^3}\right]s_{y^j}\\
&=\left[\dfrac{\splitdfrac{(n+1)s^7+(-11n+1)s^6+36ns^5-2\left\lbrace (n+13)b^2+18n \right\rbrace s^4 }{+4(n+13)b^2s^3-36b^2s^2+8(2-n)b^4s+8(2n-1)b^4}}{2\left( s^3-3s^2+2b^2\right)^3}\right]s_{y^j},
\end{align*}
\begin{align*}
\dfrac{\partial^2 A}{\partial y^i \partial y^j} &=\dfrac{\partial}{\partial y^i}\left[\dfrac{\splitdfrac{(n+1)s^7+(-11n+1)s^6+36ns^5-2\left\lbrace (n+13)b^2+18n \right\rbrace s^4 }{+4(n+13)b^2s^3-36b^2s^2+8(2-n)b^4s+8(2n-1)b^4}}{2\left( s^3-3s^2+2b^2\right)^3}\right] s_{y^j} \\
&\ \ \ \ \ + \left[\dfrac{\splitdfrac{(n+1)s^7+(-11n+1)s^6+36ns^5-2\left\lbrace (n+13)b^2+18n \right\rbrace s^4 }{+4(n+13)b^2s^3-36b^2s^2+8(2-n)b^4s+8(2n-1)b^4}}{2\left( s^3-3s^2+2b^2\right)^3}\right]s_{y^i y^j}\\
%&=\left[\dfrac{\splitdfrac{\splitdfrac{\splitdfrac{\left( s^3-3s^2+2b^2\right)^3}{\left\lbrace  7(n+1)s^6+6(-11n+1)s^5+180ns^4-8\left\lbrace (n+13)b^2+18n \right\rbrace s^3 +12(n+13)b^2s^2-72b^2s+8(2-n)b^4\right\rbrace }}{-\left( s^3-3s^2+2b^2\right)^2\left(9s^2-18s \right)}}{\left\lbrace (n+1)s^7+(-11n+1)s^6+36ns^5-2\left\lbrace (n+13)b^2+18n \right\rbrace s^4 +4(n+13)b^2s^3-36b^2s^2+8(2-n)b^4s+8(2n-1)b^4\right\rbrace}}{2\left( s^3-3s^2+2b^2\right)^6}\right]s_{y^i} s_{y^j} \\
%&\ \ \ \ \ + \left[\dfrac{\splitdfrac{(n+1)s^7+(-11n+1)s^6+36ns^5-2\left\lbrace (n+13)b^2+18n \right\rbrace s^4 }{+4(n+13)b^2s^3-36b^2s^2+8(2-n)b^4s+8(2n-1)b^4}}{2\left( s^3-3s^2+2b^2\right)^3}\right]s_{y^i y^j}\\
%&=\left[\dfrac{\splitdfrac{\splitdfrac{\splitdfrac{\left( s^3-3s^2+2b^2\right)\bigg\{ 7(n+1)s^6+6(-11n+1)s^5+180ns^4}{-8\left\lbrace (n+13)b^2+18n \right\rbrace s^3 +12(n+13)b^2s^2-72b^2s+8(2-n)b^4\bigg\}}}{-\left(9s^2-18s \right)\bigg\{ (n+1)s^7 +(-11n+1)s^6+36ns^5-2\left\lbrace (n+13)b^2+18n \right\rbrace s^4 }}{+4(n+13)b^2s^3-36b^2s^2+8(2-n)b^4s+8(2n-1)b^4\bigg\}}}{2\left( s^3-3s^2+2b^2\right)^4}\right]s_{y^i} s_{y^j} \\
%&\ \ \ \ \ + \left[\dfrac{\splitdfrac{(n+1)s^7+(-11n+1)s^6+36ns^5-2\left\lbrace (n+13)b^2+18n \right\rbrace s^4 }{+4(n+13)b^2s^3-36b^2s^2+8(2-n)b^4s+8(2n-1)b^4}}{2\left( s^3-3s^2+2b^2\right)^3}\right]s_{y^i y^j}\\
&=\left[\dfrac{\splitdfrac{\splitdfrac{\splitdfrac{-2(n+1)s^9-6(-5n+1)s^8-144ns^7+24\left\lbrace (n+6)b^2+12n \right\rbrace s^6}{ -4\left\lbrace(37n+114)b^2+54n \right\rbrace s^5+36(20+11n)b^2s^4}}{+48\left\lbrace (-7+n)b^4-(n+9)b^2\right\rbrace s^3+48(13-5n)b^4s^2}}{+288(n-1)b^4s+16(2-n)b^6}}{2\left( s^3-3s^2+2b^2\right)^4}\right]s_{y^i} s_{y^j} \\
&\ \ \ \ \ + \left[\dfrac{\splitdfrac{(n+1)s^7+(-11n+1)s^6+36ns^5-2\left\lbrace (n+13)b^2+18n \right\rbrace s^4 }{+4(n+13)b^2s^3-36b^2s^2+8(2-n)b^4s+8(2n-1)b^4}}{2\left( s^3-3s^2+2b^2\right)^3}\right]s_{y^i y^j}.
\end{align*}
From the equation (\ref{ee1}),  $S$- curvature at the origin is given by 
$$ S(H,y) =  \dfrac{A}{\alpha} \left\langle \left[ v, y\right] _{\mathfrak m}, y \right\rangle + A\left(1-\dfrac{2}{s}\right)  \left\langle \left[v, y\right] _{\mathfrak m}, v\right\rangle.  $$

Further, we can write 
$$ S(H,y)= \mathrm{I}+ \mathrm{II}, $$ 
where $$ \mathrm{I}=\dfrac{A}{\alpha} \left\langle \left[ v, y\right] _{\mathfrak m}, y \right\rangle\ \ \text{and} \ \ \mathrm{II}= A\left(1-\dfrac{2}{s}\right)  \left\langle \left[v, y\right] _{\mathfrak m}, v\right\rangle  . $$
Therefore, the mean Berwald curvature is 
$$ \dfrac{1}{2}\dfrac{\partial^2 S}{\partial y^i \partial y^j}= \dfrac{1}{2}\left( \dfrac{\partial^2 \mathrm{I}}{\partial y^i \partial y^j}+ \dfrac{\partial^2 \mathrm{II}}{\partial y^i \partial y^j}\right), $$
where,
\begin{align*}
\dfrac{\partial \mathrm{I}}{\partial y^j} &= \dfrac{\partial}{\partial y^j}\left(\dfrac{A}{\alpha} \left\langle \left[ v, y\right] _{\mathfrak m}, y \right\rangle  \right) \\
&=\left( \dfrac{1}{\alpha}\dfrac{\partial A}{\partial y^j} - \dfrac{A}{\alpha^2} \dfrac{y_j}{\alpha}\right) \left\langle \left[ v, y\right] _{\mathfrak m}, y \right\rangle + \dfrac{A}{\alpha}\bigg( \left\langle \left[ v, v_j\right] _{\mathfrak m}, y \right\rangle + \left\langle \left[ v, y\right] _{\mathfrak m}, v_j \right\rangle  \bigg),  \\
\dfrac{\partial^2 \mathrm{I}}{\partial y^i \partial y^j} &= \dfrac{\partial}{\partial y^i}\left\lbrace \left( \dfrac{1}{\alpha}\dfrac{\partial A}{\partial y^j} - \dfrac{Ay_j}{\alpha^3} \right) \left\langle \left[ v, y\right] _{\mathfrak m}, y \right\rangle + \dfrac{A}{\alpha}\bigg( \left\langle \left[ v, v_j\right] _{\mathfrak m}, y \right\rangle + \left\langle \left[ v, y\right] _{\mathfrak m}, v_j \right\rangle  \bigg) \right\rbrace \\
&= \left( \dfrac{1}{\alpha}\dfrac{\partial^2 A}{\partial y^i \partial y^j}-\dfrac{1}{\alpha^2}\dfrac{y_i}{\alpha}\dfrac{\partial A}{\partial y^j}-\dfrac{y_j}{\alpha^3}\dfrac{\partial A}{\partial y^i}-\dfrac{A}{\alpha^3}\delta^j_{i}+\dfrac{3Ay_j}{\alpha^4}\dfrac{y_i}{\alpha}\right) \left\langle \left[ v, y\right] _{\mathfrak m}, y \right\rangle  \\
& \ \ \ \  + \left( \dfrac{1}{\alpha}\dfrac{\partial A}{\partial y^j} - \dfrac{A y_j} {\alpha^3}\right)\bigg( \left\langle \left[ v, v_i\right] _{\mathfrak m}, y \right\rangle +\left\langle \left[ v, y\right] _{\mathfrak m}, v_i \right\rangle  \bigg) \\
&\ \ \ \  +\left( \dfrac{1}{\alpha}\dfrac{\partial A}{\partial y^i}-\dfrac{A}{\alpha^2}\dfrac{y_i}{\alpha}\right) \bigg( \left\langle \left[ v, v_j\right] _{\mathfrak m}, y \right\rangle +\left\langle \left[ v, y\right] _{\mathfrak m}, v_j \right\rangle  \bigg) \\
& \ \ \ \ + \dfrac{A}{\alpha}\bigg( \left\langle \left[ v, v_j\right] _{\mathfrak m}, v_i \right\rangle +\left\langle \left[ v, v_i\right] _{\mathfrak m}, v_j \right\rangle  \bigg)    \\
&= \left( \dfrac{1}{\alpha}\dfrac{\partial^2 A}{\partial y^i \partial y^j}-\dfrac{y_i}{\alpha^3}\dfrac{\partial A}{\partial y^j}-\dfrac{y_j}{\alpha^3}\dfrac{\partial A}{\partial y^i}-\dfrac{A}{\alpha^3}\delta^j_{i}+\dfrac{3Ay_{i}y_j}{\alpha^5}\right) \left\langle \left[ v, y\right] _{\mathfrak m}, y \right\rangle  \\
& \ \ \ \  + \left( \dfrac{1}{\alpha}\dfrac{\partial A}{\partial y^j} - \dfrac{A y_j} {\alpha^3}\right)\bigg( \left\langle \left[ v, v_i\right] _{\mathfrak m}, y \right\rangle +\left\langle \left[ v, y\right] _{\mathfrak m}, v_i \right\rangle  \bigg) \\
&\ \ \ \  +\left( \dfrac{1}{\alpha}\dfrac{\partial A}{\partial y^i}-\dfrac{A y_i}{\alpha^3}\right) \bigg( \left\langle \left[ v, v_j\right] _{\mathfrak m}, y \right\rangle +\left\langle \left[ v, y\right] _{\mathfrak m}, v_j \right\rangle  \bigg) \\
& \ \ \ \ + \dfrac{A}{\alpha}\bigg( \left\langle \left[ v, v_j\right] _{\mathfrak m}, v_i \right\rangle +\left\langle \left[ v, v_i\right] _{\mathfrak m}, v_j \right\rangle  \bigg) ,  
\end{align*}
and 
\begin{align*}
\dfrac{\partial \mathrm{II}}{\partial y^j} &=\dfrac{\partial}{\partial y^j}\left( A\left( 1-\dfrac{2}{s} \right) \left\langle \left[v, y\right] _{\mathfrak m}, v\right\rangle \right)\\
&= \left\lbrace \left( 1-\dfrac{2}{s} \right)\dfrac{\partial A}{\partial y^j} + \dfrac{2As_{y^j}}{s^2}\right\rbrace \left\langle \left[v, y\right] _{\mathfrak m}, v\right\rangle + A\left( 1-\dfrac{2}{s} \right)\left\langle \left[v, v_j\right] _{\mathfrak m}, v\right\rangle .\\
\dfrac{\partial^2 \mathrm{II}}{\partial y^i \partial y^j }
&= \dfrac{\partial}{\partial y^i}\left[ \left\lbrace \left( 1-\dfrac{2}{s} \right)\dfrac{\partial A}{\partial y^j} + \dfrac{2As_{y^j}}{s^2}\right\rbrace \left\langle \left[v, y\right] _{\mathfrak m}, v\right\rangle + A\left( 1-\dfrac{2}{s} \right)\left\langle \left[v, v_j\right] _{\mathfrak m}, v\right\rangle\right] \\
&= \left\lbrace \left( 1-\dfrac{2}{s} \right)\dfrac{\partial^2 A}{\partial y^i \partial y^j }+ \dfrac{2s_{y^i}}{s^2}\dfrac{\partial A}{\partial y^j} +\dfrac{2s_{y^j}}{s^2}\dfrac{\partial A}{\partial y^i}-\dfrac{4As_{y^i}s_{y^j}}{s^3}+\dfrac{2A}{s^2}s_{y^i y^j}\right\rbrace \left\langle \left[v, y\right] _{\mathfrak m}, v\right\rangle \\
& \ \ \ \ \ +  \left\lbrace \left( 1-\dfrac{2}{s} \right)\dfrac{\partial A}{\partial y^j} + \dfrac{2As_{y^j}}{s^2}\right\rbrace \left\langle \left[v, v_i\right] _{\mathfrak m}, v\right\rangle + \left\lbrace \left( 1-\dfrac{2}{s} \right)\dfrac{\partial A}{\partial y^i} + \dfrac{2As_{y^i}}{s^2}\right\rbrace \left\langle \left[v, v_j\right] _{\mathfrak m}, v\right\rangle + 0.
\end{align*}
The above computations regarding  mean Berwald curvature of the homogeneous Finsler space with infinite series  $(\alpha, \beta)$-metric are summarized as follows:
\begin{theorem}
	Let $F= \dfrac{\beta^2}{\beta-\alpha}$   be a $G$-invariant infinite series $(\alpha, \beta)$-metric on the reductive  homogeneous Finsler space $G/H$ with a decomposition of the Lie algebra $\mathfrak{g} = \mathfrak{h} +\mathfrak{m}$. Then the mean Berwald curvature of the homogeneous Finsler space with infinite series metric  is given by 
	\begin{align*}
	E_{ij}(H,y)&= \dfrac{1}{2}\bigg[ \left( \dfrac{1}{\alpha}\dfrac{\partial^2 A}{\partial y^i \partial y^j}-\dfrac{y_i}{\alpha^3}\dfrac{\partial A}{\partial y^j}-\dfrac{y_j}{\alpha^3}\dfrac{\partial A}{\partial y^i}-\dfrac{A}{\alpha^3}\delta^j_{i}+\dfrac{3Ay_{i}y_j}{\alpha^5}\right) \left\langle \left[ v, y\right] _{\mathfrak m}, y \right\rangle  \\
	& \ \ \ \  + \left( \dfrac{1}{\alpha}\dfrac{\partial A}{\partial y^j} - \dfrac{A y_j} {\alpha^3}\right)\bigg( \left\langle \left[ v, v_i\right] _{\mathfrak m}, y \right\rangle +\left\langle \left[ v, y\right] _{\mathfrak m}, v_i \right\rangle  \bigg) \\
	&\ \ \ \  +\left( \dfrac{1}{\alpha}\dfrac{\partial A}{\partial y^i}-\dfrac{A y_i}{\alpha^3}\right) \bigg( \left\langle \left[ v, v_j\right] _{\mathfrak m}, y \right\rangle +\left\langle \left[ v, y\right] _{\mathfrak m}, v_j \right\rangle  \bigg) \\
	& \ \ \ \ + \dfrac{A}{\alpha}\bigg( \left\langle \left[ v, v_j\right] _{\mathfrak m}, v_i \right\rangle +\left\langle \left[ v, v_i\right] _{\mathfrak m}, v_j \right\rangle  \bigg)   \\
	&\ \ \ \  + \left\lbrace \left( 1-\dfrac{2}{s} \right)\dfrac{\partial^2 A}{\partial y^i \partial y^j }+ \dfrac{2s_{y^i}}{s^2}\dfrac{\partial A}{\partial y^j} +\dfrac{2s_{y^j}}{s^2}\dfrac{\partial A}{\partial y^i}-\dfrac{4As_{y^i}s_{y^j}}{s^3}+\dfrac{2A}{s^2}s_{y^i y^j}\right\rbrace \left\langle \left[v, y\right] _{\mathfrak m}, v\right\rangle \\
	& \ \ \ \  +  \left\lbrace \left( 1-\dfrac{2}{s} \right)\dfrac{\partial A}{\partial y^j} + \dfrac{2As_{y^j}}{s^2}\right\rbrace \left\langle \left[v, v_i\right] _{\mathfrak m}, v\right\rangle + \left\lbrace \left( 1-\dfrac{2}{s} \right)\dfrac{\partial A}{\partial y^i} + \dfrac{2As_{y^i}}{s^2}\right\rbrace \left\langle \left[v, v_j\right] _{\mathfrak m}, v\right\rangle 
	\bigg],
	\end{align*}
	where $v \in \mathfrak{m}$ corresponds to the 1-form $\beta$ and $\mathfrak{m}$ is identified with the tangent space $T_{H}\left( G/H\right) $ of $G/H$ at the origin $H$.
\end{theorem}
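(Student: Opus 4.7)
The plan is to compute $E_{ij}(H,y)=\tfrac{1}{2}\,\partial^{2} S/\partial y^{i}\partial y^{j}$ by direct differentiation of the $S$-curvature formula furnished by Theorem~\ref{t1}. First I would split
\[
S(H,y) = \mathrm{I} + \mathrm{II}, \qquad \mathrm{I} = \frac{A}{\alpha}\bigl\langle [v,y]_{\mathfrak m}, y\bigr\rangle, \qquad \mathrm{II} = A\Bigl(1-\tfrac{2}{s}\Bigr)\bigl\langle [v,y]_{\mathfrak m}, v\bigr\rangle,
\]
so that by linearity $E_{ij} = \tfrac{1}{2}\bigl(\partial^{2}\mathrm{I}/\partial y^{i}\partial y^{j} + \partial^{2}\mathrm{II}/\partial y^{i}\partial y^{j}\bigr)$ and the two halves can be differentiated separately.

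Before expanding, I would collect the scalar building blocks at the origin, where $a_{ij}=\delta_{ij}$ and hence $y_{i}=y^{i}$. Directly from the definitions one has $\alpha_{y^{i}}=y_{i}/\alpha$, $\beta_{y^{i}}=b_{i}$, $s_{y^{i}}=(b_{i}\alpha - s\,y_{i})/\alpha^{2}$, and the second derivative $s_{y^{i}y^{j}}$ recorded in the preamble. Because $A$ depends on $y$ only through $s$ (with $b^{2}$ constant in $y$), its derivatives reduce to chain-rule expressions $\partial A/\partial y^{j} = A'(s)\,s_{y^{j}}$ and $\partial^{2} A/\partial y^{i}\partial y^{j} = A''(s)\,s_{y^{i}}s_{y^{j}} + A'(s)\,s_{y^{i}y^{j}}$, where $A'(s)$ and $A''(s)$ are obtainable in closed form from the polynomial-in-$s$ expression for $A$ displayed just before Theorem~\ref{t1}.

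Next I would apply the Leibniz rule to $\mathrm{I}$ twice. The bilinear bracket $\langle[v,y]_{\mathfrak m},y\rangle$ is a quadratic form in $y$, giving $\partial_{y^{i}}\langle[v,y]_{\mathfrak m},y\rangle = \langle[v,v_{i}]_{\mathfrak m},y\rangle + \langle[v,y]_{\mathfrak m},v_{i}\rangle$ and a symmetric sum of the $v_{i},v_{j}$ bracket terms at second order. Using $\partial_{y^{j}}(A/\alpha) = \alpha^{-1}\partial A/\partial y^{j} - A\,y_{j}/\alpha^{3}$ and differentiating once more yields the four blocks of terms attached to $\langle[v,y]_{\mathfrak m},y\rangle$, to $\langle[v,v_{i}]_{\mathfrak m},y\rangle+\langle[v,y]_{\mathfrak m},v_{i}\rangle$, to its $(i\leftrightarrow j)$ partner, and to $\langle[v,v_{i}]_{\mathfrak m},v_{j}\rangle+\langle[v,v_{j}]_{\mathfrak m},v_{i}\rangle$ listed in the statement. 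For $\mathrm{II}$ the bracket $\langle[v,y]_{\mathfrak m},v\rangle$ is linear in $y$, so its second derivative vanishes and only the $y$-derivatives of the scalar factor $A(1-2/s)$ contribute; expanding these via $\partial_{y^{j}}(1-2/s) = (2/s^{2})\,s_{y^{j}}$ and the chain rule for $A$ produces the remaining coefficients, including the $-4A\,s_{y^{i}}s_{y^{j}}/s^{3}$ and $2A\,s_{y^{i}y^{j}}/s^{2}$ pieces in the theorem.

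The main obstacle is purely organizational rather than conceptual: after the double differentiation of $\mathrm{I}$ one produces several cross terms of the form $(y_{i}/\alpha^{3})\,\partial A/\partial y^{j}$ that must be combined with their $(i\leftrightarrow j)$ partners, and one has to keep track of which contributions decorate which bracket so that manifest symmetry $E_{ij}=E_{ji}$ is preserved. Once every term is gathered and the overall factor $\tfrac{1}{2}$ is applied, the claimed formula for $E_{ij}(H,y)$ follows.
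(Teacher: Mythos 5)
Your proposal is correct and follows essentially the same route as the paper: it splits $S(H,y)$ into the two pieces $\mathrm{I}=\frac{A}{\alpha}\langle[v,y]_{\mathfrak m},y\rangle$ and $\mathrm{II}=A\left(1-\frac{2}{s}\right)\langle[v,y]_{\mathfrak m},v\rangle$, computes $s_{y^i}$, $s_{y^iy^j}$ and the derivatives of $A$ by the chain rule through $s$, and applies the Leibniz rule twice, using that the first bracket is quadratic and the second linear in $y$. This is exactly the computation the paper carries out before stating the theorem.
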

Next, we compute mean Berwald curvature for homogeneous Finsler space with exponential metric.
To find it, we require the following computations:\\
At the origin, $a_{ij}= \delta^i_j$ and therefore $ y_{_i}=y^i.$
%\begin{align*}
%\alpha_{_{y^i}} &=\dfrac{y_{_i}}{\alpha},\\
%\beta_{_{y^i}} &= b_i,\\
%s_{_{y^i}} &= \dfrac{\partial}{\partial y^i}\left( \dfrac{\beta}{\alpha}\right) = \dfrac{b_i \alpha - s y_{_i}}{\alpha^2},
%\end{align*}
\begin{align*}
%s_{_{y^i y^j}} &= \dfrac{\partial}{\partial y^j}\left( \dfrac{b_i \alpha - s y_{_i}}{\alpha^2}\right) \\
%&= \dfrac{\alpha^2\left\lbrace b_i \dfrac{y_{_j}}{\alpha}- \left( \dfrac{b_j \alpha -s y_{_j}}{\alpha^2}\right)y_{_i} - s \delta^i_j \right\rbrace - \left( b_i \alpha -s y_{_i}\right)2 \alpha \dfrac{y_{_j}}{\alpha}  }{\alpha^4}\\
%&= \dfrac{-\left( b_i y_{j}+b_j y_{i}\right) \alpha + 3s y_{i} y_j -\alpha^2 s \delta ^i_j }{\alpha ^4},\\
B &= \dfrac{ 2ns^3+ns^2-\left( 3+3n+2nb^2\right)s +\left( 2+n\right)b^2 +n+1  }{2\left( -s^2-s+1+b^2\right) ^2}.\\
\dfrac{\partial B}{\partial y^j} &= \left[\dfrac{\splitdfrac{\left( -s^2-s+b^2+1\right)^2 \bigg\{ 6ns^2+2ns-\left( 3+3n+2nb^2\right) \bigg\}  +\left( -s^2-s+1+b^2\right) \left(4s+2 \right) }{\bigg\{ 2ns^3+ns^2-\left( 3+3n+2nb^2\right)s +\left( 2+n\right)b^2 +n+1 \bigg\}  } }{2\left( -s^2-s+1+b^2\right) ^4}\right] s_{y^j} \\
%\end{align*}
%\begin{align*}
%&= \left[\dfrac{\splitdfrac{\left( -s^2-s+b^2+1\right) \bigg\{ 6ns^2+2ns-\left( 3+3n+2nb^2\right) \bigg\}}{  + \left(4s+2 \right) \bigg\{ 2ns^3+ns^2-\left( 3+3n+2nb^2\right)s +\left( 2+n\right)b^2 +n+1 \bigg\}  } }{2\left( -s^2-s+1+b^2\right) ^3}\right] s_{y^j} \\
%&= \left[ \dfrac{\splitdfrac{\splitdfrac{\splitdfrac{-6ns^4-2ns^3+\left( 3+3n+2nb^2\right)s^2-6ns^3-2ns^2+\left( 3+3n+2nb^2\right)s}{+6n\left(1+b^2 \right)s^2+2n\left(1+b^2 \right)s-\left(1+b^2 \right)\left( 3+3n+2nb^2\right) }}{+8ns^4+4ns^3-4\left( 3+3n+2nb^2\right)s^2+4\left( \left( 2+n\right)b^2 +n+1\right)s }}{+4ns^3+2ns^2-2\left( 3+3n+2nb^2\right)s+2\left( \left( 2+n\right)b^2 +n+1\right) }}{2\left( -s^2-s+1+b^2\right) ^3}\right]  s_{y^j} \\
&= \left[ \dfrac{2ns^4-3(n+3)s^2+\left( 4\left(n+2 \right)b^2+3n+1 \right)s-\left(1+n+3nb^2-b^2+2nb^4 \right)  }{2\left( -s^2-s+1+b^2\right) ^3}\right]s_{y^j}.
\end{align*}
\begin{align*}
\dfrac{\partial^2 B}{\partial y^i \partial y^j}&=  \dfrac{\partial}{\partial y^i}\left[ \dfrac{2ns^4-3(n+3)s^2+\left( 4\left(n+2 \right)b^2+3n+1 \right)s-\left(1+n+3nb^2-b^2+2nb^4 \right)  }{2\left( -s^2-s+1+b^2\right) ^3}\right] s_{y^j}\\
&\ \ \ \ \ \ + \left[ \dfrac{2ns^4-3(n+3)s^2+\left( 4\left(n+2 \right)b^2+3n+1 \right)s-\left(1+n+3nb^2-b^2+2nb^4 \right)  }{2\left( -s^2-s+1+b^2\right) ^3}\right] s_{y^i y^j}\\
&=\left[ \dfrac{\splitdfrac{4ns^5-2ns^4-4\left( n-2nb^2+8\right)s^3 +\left\lbrace20\left( n+2\right)b^2 +6\left(2n-1 \right)  \right\rbrace s^2 }{+2\left(-6nb^4-8nb^2+2b^2-3n-11 \right)s +2\left\lbrace \left( -n+6\right)b^2+\left(-n+4 \right)b^4-1  \right\rbrace  }}{2\left( -s^2-s+1+b^2\right) ^4}\right] s_{y^i}s_{y^j}\\
& \ \ \ \ \ + \left[ \dfrac{2ns^4-3(n+3)s^2+\left( 4\left(n+2 \right)b^2+3n+1 \right)s-\left(1+n+3nb^2-b^2+2nb^4 \right)  }{2\left( -s^2-s+1+b^2\right) ^3}\right] s_{y^i y^j}.
\end{align*}
From  equation (\ref{eeexp1}),  $S$-curvature for a homogeneous Finsler space with exponential metric at the origin is given by 
$$ S(H,y) = -\left( \dfrac{B}{\alpha} \left\langle \left[ v, y\right] _{\mathfrak m}, y \right\rangle + \dfrac{B}{1-s} \left\langle \left[v, y\right] _{\mathfrak m}, v\right\rangle\right). $$  

Further, we can write 
$$ S(H,y)= -\left( \mathrm{III}+ \mathrm{IV}\right), $$ 
where $$ \mathrm{III}=\dfrac{B}{\alpha} \left\langle \left[ v, y\right] _{\mathfrak m}, y \right\rangle \ \ \text{and} \ \ \mathrm{IV}= \dfrac{B}{1-s} \left\langle \left[v, y\right] _{\mathfrak m}, v\right\rangle. $$
Therefore, the mean Berwald curvature of a homogeneous Finsler space with exponential metric is 
$$ \dfrac{1}{2}\dfrac{\partial^2 S}{\partial y^i \partial y^j}= -\dfrac{1}{2}\left( \dfrac{\partial^2 \mathrm{III}}{\partial y^i \partial y^j}+ \dfrac{\partial^2 \mathrm{IV}}{\partial y^i \partial y^j}\right), $$
where,
\begin{align*}
\dfrac{\partial \mathrm{III}}{\partial y^j} &= \dfrac{\partial}{\partial y^j}\left(\dfrac{B}{\alpha} \left\langle \left[ v, y\right] _{\mathfrak m}, y \right\rangle  \right) \\
&=\left( \dfrac{1}{\alpha}\dfrac{\partial B}{\partial y^j} - \dfrac{B}{\alpha^2} \dfrac{y_j}{\alpha}\right) \left\langle \left[ v, y\right] _{\mathfrak m}, y \right\rangle + \dfrac{B}{\alpha}\bigg( \left\langle \left[ v, v_j\right] _{\mathfrak m}, y \right\rangle + \left\langle \left[ v, y\right] _{\mathfrak m}, v_j \right\rangle  \bigg).\\
\dfrac{\partial^2 \mathrm{III}}{\partial y^i \partial y^j} &= \dfrac{\partial}{\partial y^i}\left\lbrace \left( \dfrac{1}{\alpha}\dfrac{\partial B}{\partial y^j} - \dfrac{By_j}{\alpha^3} \right) \left\langle \left[ v, y\right] _{\mathfrak m}, y \right\rangle + \dfrac{B}{\alpha}\bigg( \left\langle \left[ v, v_j\right] _{\mathfrak m}, y \right\rangle + \left\langle \left[ v, y\right] _{\mathfrak m}, v_j \right\rangle  \bigg) \right\rbrace \\
%&= \left( \dfrac{1}{\alpha}\dfrac{\partial^2 B}{\partial y^i \partial y^j}-\dfrac{1}{\alpha^2}\dfrac{y_i}{\alpha}\dfrac{\partial B}{\partial y^j}-\dfrac{y_j}{\alpha^3}\dfrac{\partial B}{\partial y^i}-\dfrac{B}{\alpha^3}\delta^j_{i}+\dfrac{3By_j}{\alpha^4}\dfrac{y_i}{\alpha}\right) \left\langle \left[ v, y\right] _{\mathfrak m}, y \right\rangle  \\
%& \ \ \ \  + \left( \dfrac{1}{\alpha}\dfrac{\partial B}{\partial y^j} - \dfrac{B y_j} {\alpha^3}\right)\bigg( \left\langle \left[ v, v_i\right] _{\mathfrak m}, y \right\rangle +\left\langle \left[ v, y\right] _{\mathfrak m}, v_i \right\rangle  \bigg) \\
%&\ \ \ \  +\left( \dfrac{1}{\alpha}\dfrac{\partial B}{\partial y^i}-\dfrac{B}{\alpha^2}\dfrac{y_i}{\alpha}\right) \bigg( \left\langle \left[ v, v_j\right] _{\mathfrak m}, y \right\rangle +\left\langle \left[ v, y\right] _{\mathfrak m}, v_j \right\rangle  \bigg) \\
%& \ \ \ \ + \dfrac{B}{\alpha}\bigg( \left\langle \left[ v, v_j\right] _{\mathfrak m}, v_i \right\rangle +\left\langle \left[ v, v_i\right] _{\mathfrak m}, v_j \right\rangle  \bigg)    \\
&= \left( \dfrac{1}{\alpha}\dfrac{\partial^2 B}{\partial y^i \partial y^j}-\dfrac{y_i}{\alpha^3}\dfrac{\partial B}{\partial y^j}-\dfrac{y_j}{\alpha^3}\dfrac{\partial B}{\partial y^i}-\dfrac{B}{\alpha^3}\delta^j_{i}+\dfrac{3By_{i}y_j}{\alpha^5}\right) \left\langle \left[ v, y\right] _{\mathfrak m}, y \right\rangle  \\
& \ \ \ \  + \left( \dfrac{1}{\alpha}\dfrac{\partial B}{\partial y^j} - \dfrac{B y_j} {\alpha^3}\right)\bigg( \left\langle \left[ v, v_i\right] _{\mathfrak m}, y \right\rangle +\left\langle \left[ v, y\right] _{\mathfrak m}, v_i \right\rangle  \bigg) \\
&\ \ \ \  +\left( \dfrac{1}{\alpha}\dfrac{\partial B}{\partial y^i}-\dfrac{B y_i}{\alpha^3}\right) \bigg( \left\langle \left[ v, v_j\right] _{\mathfrak m}, y \right\rangle +\left\langle \left[ v, y\right] _{\mathfrak m}, v_j \right\rangle  \bigg) \\
& \ \ \ \ + \dfrac{B}{\alpha}\bigg( \left\langle \left[ v, v_j\right] _{\mathfrak m}, v_i \right\rangle +\left\langle \left[ v, v_i\right] _{\mathfrak m}, v_j \right\rangle  \bigg), 
\end{align*}
and 
\begin{align*}
\dfrac{\partial \mathrm{IV}}{\partial y^j} &= \dfrac{\partial}{\partial y^j}\left( \dfrac{B}{1-s} \left\langle \left[v, y\right] _{\mathfrak m}, v\right\rangle \right) \\
&= \left( \dfrac{1}{\left( 1-s\right) }\dfrac{\partial B}{\partial y^j} + \dfrac{Bs_{y^j}}{\left( 1-s\right)^2}\right) \left\langle \left[v, y\right] _{\mathfrak m}, v\right\rangle + \dfrac{B}{\left( 1-s\right)}\left\langle \left[v, v_j\right] _{\mathfrak m}, v\right\rangle \\
\dfrac{\partial^2 \mathrm{IV}}{\partial y^i \partial y^j }
&= \dfrac{\partial}{\partial y^i}\left\lbrace \left( \dfrac{1}{\left( 1-s\right) }\dfrac{\partial B}{\partial y^j} + \dfrac{Bs_{y^j}}{\left( 1-s\right)^2}\right) \left\langle \left[v, y\right] _{\mathfrak m}, v\right\rangle + \dfrac{B}{\left( 1-s\right)}\left\langle \left[v, v_j\right] _{\mathfrak m}, v\right\rangle\right\rbrace \\
&= \left\lbrace  \dfrac{s_{y^i}}{\left( 1-s\right)^2} \dfrac{\partial B}{\partial y^j} + \dfrac{1}{\left( 1-s\right) }\dfrac{\partial^2 B}{\partial y^i \partial y^j} + \dfrac{2Bs_{y^i}s_{y^j}}{\left( 1-s\right)^3}+ \dfrac{s_{y^j}}{\left( 1-s\right)^2} \dfrac{\partial B}{\partial y^i}+ \dfrac{Bs_{y^i y^j}}{\left( 1-s\right)^2}\right\rbrace  \left\langle \left[v, y\right] _{\mathfrak m}, v\right\rangle \\
& \ \ \ \ + \left\lbrace \dfrac{1}{\left( 1-s\right) }\dfrac{\partial B}{\partial y^j} + \dfrac{Bs_{y^j}}{\left( 1-s\right)^2}\right\rbrace  \left\langle \left[v, v_i\right] _{\mathfrak m}, v\right\rangle + \left\lbrace  \dfrac{1}{\left( 1-s\right) }\dfrac{\partial B}{\partial y^i} + \dfrac{Bs_{y^i}}{\left( 1-s\right)^2}\right\rbrace  \left\langle \left[v, v_j\right] _{\mathfrak m}, v\right\rangle + 0. 
\end{align*}
The whole above discussion regarding mean Berwald curvature of the homogeneous Finsler space with exponential metric can be summarized in the following theorem.
\begin{theorem}
	Let $F= \alpha  e^{\beta/\alpha}$   be a $G$-invariant exponential metric on the reductive  homogeneous Finsler space $G/H$ with a decomosition of the Lie algebra $\mathfrak{g} = \mathfrak{h} +\mathfrak{m}$. Then the mean Berwald curvature of the homogeneous Finsler space with exponential metric is given by 
	\begin{align*}
	E_{ij}(H,y)&= -\dfrac{1}{2}\bigg[\left( \dfrac{1}{\alpha}\dfrac{\partial^2 B}{\partial y^i \partial y^j}-\dfrac{y_i}{\alpha^3}\dfrac{\partial B}{\partial y^j}-\dfrac{y_j}{\alpha^3}\dfrac{\partial B}{\partial y^i}-\dfrac{B}{\alpha^3}\delta^j_{i}+\dfrac{3By_{i}y_j}{\alpha^5}\right) \left\langle \left[ v, y\right] _{\mathfrak m}, y \right\rangle  \\
	& \ \ \ \  \ + \left( \dfrac{1}{\alpha}\dfrac{\partial B}{\partial y^j} - \dfrac{B y_j} {\alpha^3}\right)\bigg( \left\langle \left[ v, v_i\right] _{\mathfrak m}, y \right\rangle +\left\langle \left[ v, y\right] _{\mathfrak m}, v_i \right\rangle  \bigg) \\
	&\ \ \ \  \ +\left( \dfrac{1}{\alpha}\dfrac{\partial B}{\partial y^i}-\dfrac{B y_i}{\alpha^3}\right) \bigg( \left\langle \left[ v, v_j\right] _{\mathfrak m}, y \right\rangle +\left\langle \left[ v, y\right] _{\mathfrak m}, v_j \right\rangle  \bigg) \\
	& \ \ \ \ \ + \dfrac{B}{\alpha}\bigg( \left\langle \left[ v, v_j\right] _{\mathfrak m}, v_i \right\rangle +\left\langle \left[ v, v_i\right] _{\mathfrak m}, v_j \right\rangle  \bigg)\\
	&\ \ \ \ \ + \left\lbrace  \dfrac{s_{y^i}}{\left( 1-s\right)^2} \dfrac{\partial B}{\partial y^j} + \dfrac{1}{\left( 1-s\right) }\dfrac{\partial^2 B}{\partial y^i \partial y^j} + \dfrac{2Bs_{y^i}s_{y^j}}{\left( 1-s\right)^3}+ \dfrac{s_{y^j}}{\left( 1-s\right)^2} \dfrac{\partial B}{\partial y^i}+ \dfrac{Bs_{y^i y^j}}{\left( 1-s\right)^2}\right\rbrace  \left\langle \left[v, y\right] _{\mathfrak m}, v\right\rangle \\
	& \ \ \ \ \ + \left\lbrace \dfrac{1}{\left( 1-s\right) }\dfrac{\partial B}{\partial y^j} + \dfrac{Bs_{y^j}}{\left( 1-s\right)^2}\right\rbrace  \left\langle \left[v, v_i\right] _{\mathfrak m}, v\right\rangle + \left\lbrace  \dfrac{1}{\left( 1-s\right) }\dfrac{\partial B}{\partial y^i} + \dfrac{Bs_{y^i}}{\left( 1-s\right)^2}\right\rbrace  \left\langle \left[v, v_j\right] _{\mathfrak m}, v\right\rangle \bigg],
	\end{align*}
	where $v \in \mathfrak{m}$ corresponds to the 1-form $\beta$ and $\mathfrak{m}$ is identified with the tangent space $T_{H}\left( G/H\right) $ of $G/H$ at the origin $H$.
\end{theorem}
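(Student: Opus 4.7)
The plan is to mimic, step by step, the computation just carried out for the infinite series metric, replacing the coefficient $A$ there by the analogous coefficient $B$ appearing in Theorem~\ref{texp1}. More precisely, I would start from the formula
\[
S(H,y) \;=\; -\left(\frac{B}{\alpha}\bigl\langle [v,y]_{\mathfrak m}, y\bigr\rangle + \frac{B}{1-s}\bigl\langle [v,y]_{\mathfrak m}, v\bigr\rangle\right)
\]
supplied by Theorem~\ref{texp1}, where
\[
B \;=\; \frac{2ns^3+ns^2-(3+3n+2nb^2)s+(2+n)b^2+n+1}{2(-s^2-s+1+b^2)^2}.
\]
Split the $S$-curvature as $S(H,y) = -(\mathrm{III}+\mathrm{IV})$ with
\[
\mathrm{III} = \frac{B}{\alpha}\bigl\langle [v,y]_{\mathfrak m}, y\bigr\rangle,\qquad \mathrm{IV} = \frac{B}{1-s}\bigl\langle [v,y]_{\mathfrak m}, v\bigr\rangle,
\]
so that $E_{ij}(H,y) = -\tfrac12\bigl(\partial^2\mathrm{III}/\partial y^i\partial y^j + \partial^2\mathrm{IV}/\partial y^i\partial y^j\bigr)$.

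Next, I would assemble the ingredients needed to differentiate $B$. At the origin $a_{ij}=\delta_{ij}$ and so $y_i=y^i$. Using the identities
\[
\alpha_{y^i} = \frac{y_i}{\alpha},\qquad \beta_{y^i}=b_i,\qquad s_{y^i}=\frac{b_i\alpha-sy_i}{\alpha^2},\qquad s_{y^iy^j}=\frac{-(b_iy_j+b_jy_i)\alpha+3sy_iy_j-\alpha^2 s\delta^i_j}{\alpha^4},
\]
together with the chain rule $\partial B/\partial y^j = (dB/ds)\,s_{y^j}$ and $\partial^2 B/\partial y^i\partial y^j = (d^2B/ds^2)s_{y^i}s_{y^j}+(dB/ds)s_{y^iy^j}$, I would record closed-form expressions for $\partial B/\partial y^j$ and $\partial^2 B/\partial y^i\partial y^j$; these are exactly the two displayed polynomials-over-$(-s^2-s+1+b^2)^k$ already written down just before the theorem in the body text.

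With these in hand, differentiation of $\mathrm{III}$ and $\mathrm{IV}$ is a mechanical application of the Leibniz rule. For $\mathrm{III}$, treat $B/\alpha$ as a scalar prefactor and $\langle [v,y]_{\mathfrak m},y\rangle$ as the vector quantity; one differentiation produces the two inner products $\langle [v,v_j]_{\mathfrak m},y\rangle + \langle [v,y]_{\mathfrak m},v_j\rangle$ (since $[v,\cdot]_{\mathfrak m}$ is linear and $y=y^kv_k$), and a second differentiation adds the symmetric $(i,j)$ contribution plus the terms $\langle [v,v_j]_{\mathfrak m},v_i\rangle + \langle [v,v_i]_{\mathfrak m},v_j\rangle$. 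For $\mathrm{IV}$, the prefactor is $B/(1-s)$, whose $y^j$-derivative is $(1-s)^{-1}(\partial B/\partial y^j) + B(1-s)^{-2}s_{y^j}$, and $\langle [v,y]_{\mathfrak m},v\rangle$ is linear in $y$ (so its Hessian in $y$ vanishes, which is why a \textbf{``}$+0$\textbf{''} appears at the end of the $\partial^2\mathrm{IV}/\partial y^i\partial y^j$ computation). Collecting the two Hessians, dividing by $2$ and inserting the overall minus sign yields precisely the formula stated in the theorem.

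The only real obstacle is bookkeeping: one must keep track of the symmetric $(i,j)$-pairs produced by each product rule and carefully separate terms carrying $\langle [v,y]_{\mathfrak m},y\rangle$, the two \textbf{``}mixed\textbf{''} pairings $\langle [v,v_i]_{\mathfrak m},y\rangle+\langle [v,y]_{\mathfrak m},v_i\rangle$, the purely basis-indexed terms $\langle [v,v_i]_{\mathfrak m},v_j\rangle$, and the analogous block coming from $\mathrm{IV}$. No conceptually new idea is required beyond what was used to prove the infinite-series theorem; the difference is only the explicit form of $B$ (and of its derivatives) and the replacement of the $\alpha Q = \alpha\cdot(1-2/s)$-type factor by the $1/(1-s)$-factor appropriate to $\phi(s)=e^s$. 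Once all terms are gathered and combined under the common factor $-\tfrac12$, the displayed expression for $E_{ij}(H,y)$ follows.
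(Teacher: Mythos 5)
Your proposal follows essentially the same route as the paper: the same splitting $S(H,y)=-(\mathrm{III}+\mathrm{IV})$, the same chain-rule expressions for $s_{y^i}$, $s_{y^iy^j}$, $\partial B/\partial y^j$ and $\partial^2 B/\partial y^i\partial y^j$, and the same double application of the Leibniz rule, including the observation that the Hessian of the $y$-linear factor $\left\langle [v,y]_{\mathfrak m}, v\right\rangle$ vanishes. The argument is correct and matches the paper's proof in all essentials.
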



\begin{thebibliography}{99}
	\bibitem{AIM} P. L. Antonelli, R. S. Ingarden, and M. Matsumoto, \textit{The Theory of Sprays and Finsler spaces with Applications in Physics and Biology}, Kluwer Academic Publishers, Netherlands, 58, 1993.
	

	
	\bibitem{BCS} 	D. Bao, S. S. Chern and Z. Shen, \textit{ An Introduction to Riemann-Finsler Geometry}, GTM-200,  Springer-Verlag 2000.
	
	
	\bibitem{CS} X. Cheng and Z. Shen, A class of Finsler metrics with isotropic S-curvature,  \textit{Israel J. Math.}, \textbf{169} (2009), 317-340.
	
	
	\bibitem{1996 Chern} S. S. Chern, Finsler geometry is just Riemannian geometry without the quadratic restriction, \textit{Notices Amer. Math. Soc.}, \textbf{43} (9) (1996), 959-963.
	
	
	\bibitem{CSBOOK} S. S. Chern and Z. Shen, \textit{Riemann-Finsler Geometry}, Nankai Tracts in Mathematics, Vol. 6,  World Scientific Publishers, 2005.
	
	
	\bibitem{S-cur D}	S. Deng, The S-curvature of homogeneous Randers spaces, \textit{Differ. Geom. Appl.}, \textbf{27} (2009), 75-84.
	
	
	\bibitem{Homogeneous Finsler Spaces} S. Deng, \textit{Homogeneous Finsler Spaces}, Springer Monographs in Mathematics, New York, 2012. 
	
	
	\bibitem{DH} S. Deng and Z. Hou, The group of isometries of a Finsler space, \textit{Pacific J. Math}, \textbf{207} (2002), 149-155.
	
	\bibitem{Invariant} S. Deng and Z. Hou, Invariant Randers metrics on Homogeneous Riemannian manifolds, \textit{J. Phys. A: Math. Gen.} \textbf{37} (2004), 4353-4360; Corrigendum, \textit{ibid}, \textbf{39} (2006), 5249-5250.
	
	\bibitem{S-cur DW}	S. Deng and X. Wang, The S-curvature of homogeneous $(\alpha, \beta)$-metrics, \textit{Balkan J. Geom. Appl.}, \textbf{15} (2) (2010), 39-48.
	
	
	\bibitem{Finsler} P. Finsler, $\ddot{U}$ber Kurven und Fl$\ddot{a}$chen in allgemeinen R$\ddot{a}$umen, (Dissertation, G$\ddot{o}$ttingen, 1918), \textit{Birkh$\ddot{a}$user Verlag}, Basel, 1951.
	
	
	\bibitem{H} S. Helgason, \textit{Differential Geometry, Lie groups and Symmetric Spaces}, 2nd ed., Academic Press, New York, 1978.
	
	
	\bibitem{Klein} F. Klein, Vergleichende Betrachtungen $\ddot{u}$ber neuere geometrische Forschungen, Andreas Deichert, Erlangen, 1872; English trans. M. W. Haskell, with additional footnotes by the author, A comparative review
	of recent researches in geometry, \textit{Bulletin of the New York Mathematical Society}, \textbf{2} (1892–1893), 215-249.
	
	
	
	\bibitem{KN}  S. Kobayashi and  K. Nomizu, \textit{Foundations of Differential Geometry}, Interscience Publishers, Vol. 1, 1963; Vol. 2, 1969.
	
	\bibitem{M.Mat1972} M. Matsumoto, On a C-reducible Finsler space, \textit{Tensor N. S.}, \textbf{24} (1972), 29-37.
	
	\bibitem{M.Mat1992} M. Matsumoto, Theory of Finsler spaces with $(\alpha, \beta)$ metric, \textit{Rep. on Math. Phys.}, \textbf{31} (1992),  43-83.
	
	
	\bibitem{Milnor} J. Milnor, Curvature of left invariant Riemannian metrics on Lie groups, \textit{Adv. Math.}, \textbf{21} (1976), 293-329.
	
	
	\bibitem{Myers-Steenrod} S. B. Myers, N. Steenrod, The group of isometries of a Riemannian manifold, \textit{Ann. Math.}, \textbf{40} (1939), 400-416.
	
	
	\bibitem{N} K. Nomizu, Invariant affine connections on homogeneous spaces, \textit{Amer. J. Math.}, \textbf{76} (1954), 33-65.
	
	
	\bibitem{Randers} G. Randers, On an asymmetric metric in the four-space of general relativity, \textit{Phys. Rev.}, \textbf{59} (1941), 195-199.
	
	
	\bibitem{SB1} G. Shanker and  S. A. Baby, The $L$-dual of a Finsler space with infinite series $(\alpha, \beta)-$metric, \textit{Bull. Cal. Math. Soc.}, \textbf{107} (4) (2015), 337-356.
	
	\bibitem{SB2}  G. Shanker and  S. A. Baby, On the projective flatness of a Finsler space with infinite series $(\alpha, \beta)-$metric, \textit{South East Asian J. of Math $\&$ Math. Sci.}, \textbf{11} (1) (2015), 17-24.
	
	\bibitem{SR} G. Shanker and Ravindra, On Randers change of exponential metric, \textit{Applied Sciences}, \textbf{15} (2013), 94-103. 

	\bibitem{S}  Z. Shen, Volume comparison and its applications in Riemann-Finsler geometry, \textit{Advances in Mathematics}, \textbf{128} (1997), 306-328.
	
\end{thebibliography}
	\end{document}